\newcommand{\rtn}{\mathbb{Q}}
\newcommand{\nnb}{\mathbb{N}}
\newcommand{\rl}{\mathbb{R}}
\newcommand{\cx}{\mathbb{C}}
\newcommand{\CL}{\mathcal{L}}
\newcommand{\CX}{\mathcal{X}}
\newcommand{\ai}{\sqrt{-1}}
\newcommand{\inj}{\hookrightarrow}
\newcommand{\isom}{\xrightarrow{\sim}}
\newcommand{\cst}{\mathrm{const}}
\newcommand{\ddbar}{\partial \bar{\partial}}
\newcommand{\tr}{\mathrm{tr}}
\newcommand{\actson}{\curvearrowright}
\newcommand{\prj}{\mathbb{P}}
\theoremstyle{plain}
\newtheorem{theorem}{Theorem}[section]
\newtheorem{lemma}[theorem]{Lemma}
\newtheorem{corollary}[theorem]{Corollary}
\theoremstyle{definition}
\newtheorem{definition}[theorem]{Definition}
\theoremstyle{definition}
\newtheorem{remark}[theorem]{Remark}
\begin{document}

\title{Balanced metrics for extremal K\"ahler metrics and Fano manifolds}
\author[Y.~Hashimoto]{Yoshinori Hashimoto}
\date{\today}
\address{Department of Mathematics, Osaka Metropolitan University, 3-3-138, Sugimoto, Sumiyoshi-ku, Osaka, 558-8585, Japan.}
\email{yhashimoto@omu.ac.jp}

%

\maketitle



\begin{abstract}
	The first three sections of this paper are a survey of the author's work on balanced metrics and stability notions in algebraic geometry. The last section is devoted to proving the well-known result that a geodesically convex function on a complete Riemannian manifold admits a critical point if and only if its asymptotic slope at infinity is positive, where we present a proof which relies only on the Hopf--Rinow theorem and extends to locally compact complete length metric spaces.
\end{abstract}

\section{Introduction} \label{scitr}

Let $(X,L)$ be a pair of a compact K\"ahler manifold $X$ of complex dimension $n$ and an ample line bundle $L$, with a (fixed reference) K\"ahler metric $\omega \in c_1 (L)$. We assume $\omega$ is the K\"ahler form associated to the (fixed reference) hermitian metric $h_{\mathrm{ref}}$ on $L$, i.e.
\begin{equation*}
	\omega = \frac{1}{2 \pi \ai} \ddbar \log h_{\mathrm{ref}}.
\end{equation*}
Another hermitian metric, say $h := e^{-\phi} h_{\mathrm{ref}}$, defines another K\"ahler metric 
\begin{equation*}
	\omega_h = \omega + \ai \ddbar \phi
\end{equation*}
assuming that $\omega_h$ is positive definite; we shall also write $\omega_{\phi}$ for $\omega + \ai \ddbar \phi$.

We define $N_k := \dim_{\cx} H^0 (X, L^k)$.
	
	\begin{definition}
		Let $h$ be a hermitian metric such that $\{ s_i \}_{i=1}^{N_k}$ is a basis for $H^0 (X, L^k)$ that is orthonormal with respect to the inner product $\int_X h^k ( \cdot , \cdot  ) \omega^n_h /n!$. The \textbf{Bergman function} $\rho_k (\omega_h ) \in C^{\infty} (X, \rl)$ is defined as
		\begin{equation*}
			\rho_k (\omega_h ) := \sum_{i=1}^{N_k} |s_i|^2_{h^k} .
		\end{equation*}
	\end{definition}
	
The right hand side of the equation above depends on $h$, but it turns out that it is invariant under an overall multiplicative constant and hence depends only on the associated K\"ahler metric $\omega_h$. The Bergman function is the diagonal of the Bergman kernel. It is also called the density-of-states function or the distortion function.

The Bergman function (or more generally the Bergman kernel) has many properties as discussed in other chapters in this volume. An important result that we use in this chapter is the following.

\begin{theorem} \emph{(Bouche \cite{Bouche}, Tian \cite{tian90}, Yau \cite{yauberg}, Zelditch \cite{zelditch}, Catlin \cite{catlin}, Ruan \cite{ruan}, Lu \cite{lu}, Ma--Marinescu \cite{mm}, amongst many others)}
The Bergman function admits an asymptotic expansion
		\begin{equation*}
			\rho_k (\omega_h ) = k^n + \frac{k^{n-1}}{4 \pi } S(\omega_h ) + O(k^{n-2})
		\end{equation*}
		when $k \in \mathbb{N}$ is sufficiently large, where $S( \omega_h )$ is the scalar curvature of $\omega_h$ defined by
		\begin{equation*}
			S(\omega_h) := n \frac{\mathrm{Ric} (\omega_h ) \wedge \omega_h^{n-1}}{\omega_h^n}
		\end{equation*}
		where $\mathrm{Ric} (\omega_h )$ is the Ricci curvature of $\omega_h$, defined locally as
		\begin{equation*}
			\mathrm{Ric} (\omega_h ) := - \ai \ddbar \log \omega^n_h . 
		\end{equation*}
\end{theorem}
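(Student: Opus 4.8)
The plan is to deduce the expansion from the pointwise extremal characterisation of the Bergman function, following the peak section method of Tian, as sharpened by Ruan and Lu (and, in a different language, by Zelditch and Catlin). The basic identity is
\begin{equation*}
	\rho_k(\omega_h)(x) = \sup\left\{ \, |s(x)|^2_{h^k} \ :\ s \in H^0(X,L^k), \ \int_X |s|^2_{h^k}\, \frac{\omega_h^n}{n!} = 1 \right\},
\end{equation*}
valid for every $x \in X$, which follows from evaluating the reproducing (Bergman) kernel on the diagonal and shows in particular that the left-hand side depends only on $\omega_h$. Having fixed $x$, I would pass to Bochner coordinates $z$ centred at $x$ (also called $K$-coordinates), in which a local potential of $\omega_h$ takes the form $\psi(z) = \pi |z|^2 + (\text{explicit quartic terms built from the curvature of }\omega_h\text{ at }x) + O(|z|^5)$, the factor $\pi$ being forced by the normalisation $\omega = \frac{1}{2\pi \ai}\ddbar \log h_{\mathrm{ref}}$. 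Choosing a local frame so that $h^k = e^{-k\psi}$ and rescaling $w = \sqrt{k}\, z$, the rescaled weighted spaces converge near $x$ to the Bargmann--Fock space on $\cx^n$ with Gaussian weight $e^{-\pi|w|^2}$, whose reproducing kernel restricted to the diagonal equals the constant $1$; this already suggests the leading term $\rho_k(\omega_h) = k^n + o(k^n)$.

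For the lower bound I would make this heuristic rigorous using a H\"ormander $\bar\partial$-estimate. Take the model peak section $s$, multiply it by a cut-off $\chi$ supported in a ball of radius $\sim k^{-1/2+\varepsilon}$, and solve $\bar\partial u = \bar\partial(\chi s)$ with an $L^2$ estimate, using a plurisubharmonic weight on $L^k$ whose curvature is bounded below by $k\omega_h$ (automatic, since $c_1(L^k)$ is represented by $k\omega_h$) together with a small logarithmic singularity at $x$ so as to force $u(x) = 0$; then $\chi s - u$ is an honest section in $H^0(X,L^k)$ whose value at $x$ and whose global $L^2$-norm agree with those of $s$ up to errors of relative size $O(k^{-N})$ for every $N$. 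Computing $|s(x)|^2_{h^k}$ and $\int_X |s|^2_{h^k}\, \omega_h^n/n!$ then reduces to Gaussian integrals of the shape $\int_{\cx^n} |z^\alpha|^2 e^{-k\psi}\, \omega_h^n/n!$; expanding $e^{-k\psi} = e^{-k\pi|z|^2}\bigl(1 + k \cdot (\text{quartic}) + \cdots\bigr)$ and integrating term by term, the quartic correction contributes exactly the relative term $\frac{1}{4\pi k} S(\omega_h)(x)$ at the next order while all remaining contributions are $O(k^{-2})$, and an optimal choice of the Taylor coefficients of the peak section yields $\rho_k(\omega_h)(x) \ge k^n + \frac{k^{n-1}}{4\pi} S(\omega_h)(x) + O(k^{n-2})$.

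For the matching upper bound I would use a sub-mean-value inequality: for any $s \in H^0(X,L^k)$ with $\int_X |s|^2_{h^k}\, \omega_h^n/n! = 1$, the function $\log |s|^2_{h^k}$ is $k\omega_h$-plurisubharmonic, so $|s(x)|^2_{h^k}$ is controlled by its weighted average over the ball $B(x, k^{-1/2+\varepsilon})$, on which $\omega_h$ differs from the rescaled Bargmann--Fock model only by the same quartic-order corrections; carrying out this comparison (equivalently, invoking the Bochner--Kodaira--Nakano identity to bound $\rho_k$ above by the model kernel plus a controlled remainder) produces the reverse inequality $\rho_k(\omega_h)(x) \le k^n + \frac{k^{n-1}}{4\pi} S(\omega_h)(x) + O(k^{n-2})$. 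Combining the two estimates gives the stated expansion, and uniformity in $x$ follows from compactness of $X$: cover $X$ by finitely many coordinate charts on which the constant in the H\"ormander estimate, the size of the Bochner coordinate neighbourhood, and the bound on the quartic remainder are all uniform.

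The step I expect to be the main obstacle is exactly this uniformity, together with the precise order $O(k^{n-2})$ of the error: controlling the $\bar\partial$-solution and the sub-mean-value comparison uniformly in $x$, and keeping track of every factor of $2\pi$ through the rescaling so as to pin down the coefficient of $k^{n-1}$ as precisely $S(\omega_h)/4\pi$, both require genuine care. As alternatives I would mention the Szeg\H{o}-kernel approach of Zelditch and Catlin --- lift to the unit circle bundle of $L^{-1}$, identify $\rho_k$ with a Fourier coefficient of the Szeg\H{o} projector of the associated strictly pseudoconvex domain, substitute the Boutet de Monvel--Sj\"ostrand parametrix (a Fourier integral operator with complex phase), and run a stationary-phase expansion on the diagonal --- and the analytic localisation technique of Ma--Marinescu, which most cleanly yields the expansion together with uniform (indeed $C^\infty$, and locally uniform in families) remainder estimates.
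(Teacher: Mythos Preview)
The paper does not give its own proof of this theorem: it is quoted as a known result, with citations to Bouche, Tian, Yau, Zelditch, Catlin, Ruan, Lu, and Ma--Marinescu, and the paper immediately moves on to use it as a black box. So there is no ``paper's proof'' to compare against.

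Your outline is a faithful sketch of the peak-section method (Tian, sharpened by Ruan and Lu): the extremal characterisation of $\rho_k$, Bochner coordinates, H\"ormander $\bar\partial$-correction of a cut-off Gaussian to produce the lower bound, and a local comparison argument for the upper bound. You also correctly flag the Szeg\H{o}-kernel/stationary-phase approach of Zelditch--Catlin and the analytic localisation of Ma--Marinescu as alternatives, and those are precisely the routes taken in the other cited references. As a plan this is sound; the points you identify as delicate (uniformity in $x$, the precise $O(k^{n-2})$ remainder, and the constant $1/4\pi$) are indeed where the work lies, and in practice the Ma--Marinescu machinery or the Boutet de Monvel--Sj\"ostrand parametrix give the cleanest uniform remainder control. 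One small caveat: your upper-bound sketch via a sub-mean-value inequality is more heuristic than the lower bound; in the peak-section literature the matching upper bound is usually obtained by showing that the constructed peak section is, to the relevant order, the \emph{orthogonal projection} of an arbitrary unit-norm section onto the one-dimensional subspace that maximises $|s(x)|^2_{h^k}$, rather than by a direct PSH mean-value argument. But nothing in your proposal is wrong at the level of strategy.
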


The theorem above has many important implications and consequences, but for the moment we simply observe the appearance of the scalar curvature in the expansion above.

We recall that a K\"ahler metric $\omega_h \in c_1 (L)$ satisfying $S (\omega_h ) = \cst$ is said to be a \textbf{constant scalar curvature K\"ahler} (or \textbf{cscK}) metric. A foundational theorem of Donaldson concerning the cscK metrics states the following.

\begin{theorem} \emph{(Donaldson \cite{donproj1})} \label{thdpr1}
Suppose that $\mathrm{Aut}_0 (X,L)$ is trivial and that $X$ admits a cscK metric $\omega_{\mathrm{cscK}}$ in the K\"ahler class $c_1 (L)$. Then, there exists $k_0 \in \nnb$ such that for any $k \in \nnb$ with $k \ge k_0$	there exists a K\"ahler metric $\omega_k \in c_1 (L)$ such that
\begin{equation*}
	\rho_k (\omega_k) = \mathrm{const.}
\end{equation*}
and $\omega_k \to \omega_{\mathrm{cscK}}$ in $C^{\infty}$ as $k \to \infty$.
\end{theorem}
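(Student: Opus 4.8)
Following \cite{donproj1}, the plan is to recast the balanced equation $\rho_k(\omega_k) \equiv \cst$ as the vanishing of a finite-dimensional moment map and to solve it by perturbing $\omega_{\mathrm{cscK}}$. Fix a hermitian metric $h$ on $L$ whose Chern form is $\omega_{\mathrm{cscK}}$. For each $k \in \nnb$ the group $GL(N_k,\cx)$ acts on the space of hermitian inner products on $H^0(X,L^k) \cong \cx^{N_k}$ --- equivalently on the linear embeddings $\iota \colon X \hookrightarrow \prj(H^0(X,L^k)^*)$ attached to bases of $H^0(X,L^k)$ --- and this action is Hamiltonian with a natural moment map $\mu_k$ for the maximal compact $U(N_k)$; unwinding the definitions, $\mu_k(\iota)=0$ exactly when the rescaled pullback $\frac{1}{k}\iota^*\omega_{\mathrm{FS}}$ has constant Bergman function, i.e.\ is balanced. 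Equivalently, up to the standard identifications $\mu_k$ is the gradient of a functional $\CZ_k$ on the nonpositively curved symmetric space $GL(N_k,\cx)/U(N_k)$ whose Euler--Lagrange equation is the balanced condition and which is convex along geodesics, so a balanced metric is precisely a critical --- hence minimizing --- point of $\CZ_k$.

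The second step is to produce an approximate critical point. Let $\iota_k$ be the embedding attached to a basis of $H^0(X,L^k)$ orthonormal for $\int_X h^k(\cdot,\cdot)\, \omega_{\mathrm{cscK}}^n/n!$. Then $\iota_k^*\omega_{\mathrm{FS}} = k\omega_{\mathrm{cscK}} + \ai\ddbar \log \rho_k(\omega_{\mathrm{cscK}})$ (in the appropriate normalization of $\omega_{\mathrm{FS}}$), and the failure of $\iota_k$ to be balanced is controlled by the deviation of $\rho_k(\omega_{\mathrm{cscK}})$ from a constant. By the Bergman kernel expansion recalled above, in the refined form valid in every $C^r$-norm uniformly in $k$, together with $S(\omega_{\mathrm{cscK}}) \equiv \cst$, one obtains $\rho_k(\omega_{\mathrm{cscK}}) = k^n + O(k^{n-2})$ in $C^\infty$. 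Consequently $\frac{1}{k}\iota_k^*\omega_{\mathrm{FS}} \to \omega_{\mathrm{cscK}}$ in $C^\infty$, and, after the correct rescaling, $\mu_k(\iota_k)$ has size $O(k^{-2})$ relative to the natural norm on $\mathfrak{u}(N_k)$.

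The third and central step is a quantitative inverse function theorem along the $GL(N_k,\cx)$-orbit of $\iota_k$. The abstract input is: if at a point $p$ the value $\|\mu(p)\|$ is small compared with $\lambda^2$, where $\lambda>0$ bounds the linearization $d\mu$ from below on a ball around $p$ on which $d\mu$ is moreover Lipschitz with a controlled constant, then $\mu$ vanishes at some point within distance $O(\lambda^{-1}\|\mu(p)\|)$ of $p$ --- this follows from the geodesic convexity of $\CZ_k$ and Newton's method. The decisive ingredient is the lower bound for $d\mu_{\iota_k}$: using the Bergman expansion, its linearization is identified, after rescaling, with a finite-dimensional quantization of the Lichnerowicz operator $\lich{\omega_{\mathrm{cscK}}}$ acting on $C^\infty(X,\rl)$, modulo errors small in $k$. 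Since $\Aut_0(X,L)$ is trivial we have $\aut(X,L)=0$, hence $H^0(X,TX)=0$, hence $\ker \lich{\omega_{\mathrm{cscK}}}$ consists of the constants alone; this provides a spectral gap and so a lower bound $\lambda$ for $d\mu_{\iota_k}$, on the subspace transverse to the scaling direction, that is bounded away from $0$ uniformly in $k$. Combined with the $O(k^{-2})$ smallness of $\mu_k(\iota_k)$, the hypotheses of the abstract statement are met for all sufficiently large $k$, yielding an embedding $\iota'_k$ in the orbit of $\iota_k$, at distance $O(k^{-2})$ from it, with $\mu_k(\iota'_k)=0$. Establishing this uniform spectral bound --- and, intertwined with it, controlling how $d\mu$ degrades along the orbit so that the Lipschitz hypothesis holds on a ball large enough to contain the zero, together with keeping track of the matching powers of $k$ --- is where the real work lies; it is the main obstacle.

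Finally, set $\omega_k := \frac{1}{k}(\iota'_k)^*\omega_{\mathrm{FS}} \in c_1(L)$. By construction $\rho_k(\omega_k) \equiv \cst$, so $\omega_k$ is the desired balanced metric; and since $\iota'_k$ is $O(k^{-2})$-close to $\iota_k$ in the orbit while $\frac{1}{k}\iota_k^*\omega_{\mathrm{FS}} \to \omega_{\mathrm{cscK}}$ in $C^\infty$, these estimates propagate to give $\omega_k \to \omega_{\mathrm{cscK}}$ in $C^\infty$ as $k \to \infty$, which is the assertion.
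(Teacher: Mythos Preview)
The paper does not contain a proof of this theorem: it is stated as Donaldson's result \cite{donproj1} and used as input for the subsequent discussion, but no argument is given. So there is nothing in the paper to compare your proposal against.

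That said, your sketch is a faithful outline of Donaldson's original strategy in \cite{donproj1}: the moment-map reformulation of the balanced condition, the use of the Bergman kernel expansion (with $S(\omega_{\mathrm{cscK}})$ constant) to show that the cscK metric is approximately balanced, and the quantitative implicit function theorem driven by a uniform-in-$k$ lower bound on the linearisation coming from the spectral gap of the Lichnerowicz operator when $\aut(X,L)=0$. You correctly identify the uniform spectral estimate and the tracking of powers of $k$ as the heart of the matter. As a high-level summary this is accurate; of course the actual proof requires substantial work to make each of these steps precise (in particular Donaldson's ``$R$-boundedness'' estimates controlling the geometry of the embedded variety along the orbit), which your sketch acknowledges but does not carry out.
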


In the above, $\mathrm{Aut}_0 (X,L)$ stands for the connected component of the automorphism group of $(X,L)$, i.e.~holomorphic automorphisms of $X$ which lift to the total space of $L$. A K\"ahler metric $\omega_h \in c_1 (L)$ satisfying the equation $\rho_k (\omega_h ) = \cst$ is said to be \textbf{balanced} at the level $k$.

The above theorem has an important application to the stability of $(X,L)$ in the sense of Geometric Invariant Theory, which we recall in what follows. We first recall the following important result due to H.~Luo \cite{Luo} and S.~Zhang \cite{zhang96}.

\begin{theorem} \emph{(H.~Luo \cite{Luo}, S.~Zhang \cite{zhang96})} \label{thlzh}
	Suppose that $\mathrm{Aut}_0 (X,L)$ is trivial. There exists a balanced metric at level $k$ in $c_1(L)$ if and only if $(X,L^k)$ is Chow stable.
\end{theorem}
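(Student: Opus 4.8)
The plan is to identify balanced metrics at level $k$ with zeros of a moment map and then invoke the Kempf--Ness theorem. Since only $L^k$ enters the statement, I would replace $L$ by $L^k$ and set $N := N_k$, so that the complete linear system gives a Kodaira embedding $\iota \colon X \hookrightarrow \prj(H^0(X,L^k)^*) = \prj^{N-1}$ onto an $n$-dimensional subvariety of some degree $d$. A Hermitian inner product on $H^0(X,L^k)$, taken modulo positive scaling, is the same datum as a point of the symmetric space $SL(N,\cx)/SU(N)$; it determines a Fubini--Study form $\omega_{\mathrm{FS}}$ on $\prj^{N-1}$, hence a \kah metric $\tfrac{1}{k}\iota^*\omega_{\mathrm{FS}} \in c_1(L)$ and a Hermitian metric on $L^k$. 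By construction such a metric is balanced at level $k$ exactly when the chosen inner product coincides with the $L^2$-inner product on $H^0(X,L^k)$ induced by this very Fubini--Study geometry, i.e.\ when the inner product is a fixed point of the self-map ``inner product $\mapsto$ embedding $\mapsto$ induced $L^2$-product''; this is precisely the equation $\rho_k(\omega_h) = \cst$.

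Next I would bring in the Chow form. Let $\hat R_X$ be a Chow form of $\iota(X) \subset \prj^{N-1}$: a nonzero vector in the $SL(N,\cx)$-representation $V$ of multihomogeneous forms of degree $d$ in each of $n+1$ blocks of $N$ variables, whose projectivization is the Chow point of $(X,L^k)$. By definition $(X,L^k)$ is Chow stable iff this point is stable for the $SL(N,\cx)$-action, equivalently iff the orbit $SL(N,\cx)\cdot\hat R_X$ is Zariski closed in $V$ and the stabilizer of $\hat R_X$ is finite. Endow the line $\cx\hat R_X$ with the Chow norm $\|\cdot\|$ of \cite{zhang96} and consider $\Psi \colon SL(N,\cx) \to \rl$, $\Psi(\sigma) := \log\|\sigma\cdot\hat R_X\|$. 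The technical heart is the assertion that $\Psi$ is $SU(N)$-invariant (so descends to $SL(N,\cx)/SU(N)$), is convex along the geodesics of this symmetric space (equivalently along one-parameter subgroups of $SL(N,\cx)$), and has first variation along $\mathfrak{sl}(N,\cx)$ equal to a positive multiple of the trace-free part of the Gram-type tensor $\big(\int_X \langle s_i, s_j\rangle_{\mathrm{FS}}\,\omega_{\mathrm{FS}}^n\big)_{ij}$ of the Fubini--Study geometry; consequently $\sigma$ is a critical point of $\Psi$ — equivalently a zero of the Kempf--Ness moment map along the orbit $SL(N,\cx)\cdot\hat R_X$ — if and only if $\tfrac{1}{k}\iota^*\omega_{\mathrm{FS}}(\sigma)$ is balanced at level $k$. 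Establishing this identity, with the correct normalizations and the explicit description of the Chow norm, is where essentially all the work lies; it is due to S.~Zhang \cite{zhang96}, and to H.~Luo \cite{Luo} in the closely related Gieseker setting.

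Granting this dictionary, the theorem follows from the Kempf--Ness theorem. If a balanced metric exists then the moment map vanishes at some point of the orbit; a zero of the moment map is a minimum of $\|\cdot\|$ along the orbit (by the geodesic convexity of $\Psi$), and the attainment of such a minimum forces $SL(N,\cx)\cdot\hat R_X$ to be closed. The hypothesis that $\mathrm{Aut}_0(X,L)$ is trivial then makes the stabilizer of the Chow point finite: it is a linear algebraic group whose Lie algebra is $\{v\in H^0(X,T_X) : v \text{ lifts to } L^k\} = \aut(X,L) = 0$, and a linear algebraic group over $\cx$ with trivial Lie algebra is finite. Hence the Chow point is stable, i.e.\ $(X,L^k)$ is Chow stable. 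Conversely, if $(X,L^k)$ is Chow stable then $SL(N,\cx)\cdot\hat R_X$ is closed, so $\|\cdot\|$ attains a minimum on it — a minimizing sequence is norm-bounded, hence has a convergent subsequence, whose limit lies in the closed orbit — and at that minimum the moment map vanishes, producing a balanced metric at level $k$.

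The step I expect to be the main obstacle is the variational identity of the second paragraph. The difficulty is that the Chow norm is not induced by a Hermitian inner product on $V$ but is a genuinely nonlinear (Mahler-measure type) norm on the line of Chow forms, so one cannot simply quote the linear-algebra form of the Kempf--Ness theorem; one needs its extension to $SU(N)$-invariant norms that are convex along geodesics of the symmetric space, together with the precise first-variation computation identifying the critical-point equation with $\rho_k = \cst$. Once these analytic inputs are in place, the remaining GIT bookkeeping and the automorphism argument are routine.
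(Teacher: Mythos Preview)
The paper does not give its own proof of this theorem; it is quoted as a result of Luo and Zhang. What the paper does supply is a closely related framework at the end of Section~\ref{scitr} and in Section~\ref{scgcfcrm}: the balanced metric is the critical point of the balancing energy $Z_k$ on the symmetric space $Y_k = GL(N_k,\cx)/U(N_k)$, $Z_k$ is strictly geodesically convex when $\Aut_0(X,L)$ is trivial, and Theorem~\ref{thcxffmp} then says a critical point exists iff the asymptotic slope of $Z_k$ is positive along every geodesic ray. The paper notes that along rays with integral generator this slope equals the Chow weight, and explicitly \emph{interprets} Theorem~\ref{thlzh} as the residual statement that positivity along integral rays suffices---a step it does not carry out.

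Your outline is the original Luo--Zhang route and is correct as a sketch; you have also correctly located the real work in the variational formula for the Chow norm and the need for a Kempf--Ness theorem valid for $SU(N)$-invariant geodesically convex (rather than Hermitian) norms. The two pictures match up: your $\Psi = \log\lVert\,\cdot\,\hat R_X\rVert$ is, up to an additive constant and normalisation, the paper's balancing energy $Z_k$, so your moment-map zero is the paper's critical point, and your ``closed orbit with finite stabiliser'' is the Hilbert--Mumford rendering of the paper's ``positive asymptotic slope along all integral rays''. What your approach makes explicit is the GIT packaging---the Chow form, the orbit-closure argument, and the identification of the stabiliser Lie algebra with $\aut(X,L)$. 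What the paper's Section~\ref{scgcfcrm} buys instead is an abstraction that drops the ambient linear representation $V$ entirely: Theorem~\ref{thcxffmp} applies to any strictly geodesically convex function on a complete Riemannian manifold (or locally compact length space), which is the form needed when $Z_k$ is replaced by energies, such as the anticanonical balancing energy of Section~\ref{scacbal}, that have no straightforward Chow-form interpretation.
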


Theorems \ref{thdpr1} and \ref{thlzh} together imply that a polarised K\"ahler manifold $(X,L)$ with discrete automorphism group which admits a cscK metric is asymptotically Chow stable, i.e.~$(X,L^k)$ is Chow stable for all large enough $k$. Chow stability that appeared in the above theorem is a classical stability notion for polarised varieties (see Remark \ref{rmchtccf}), but we present a formulation that involves test configurations that is appropriate for our discussions later.

\begin{definition}
	A \textbf{very ample test configuration $(\CX , \CL)$ of exponent $k$} for $(X , L)$ is a scheme $\CX$ endowed with a flat projective morphism $\pi : \CX \to \cx$, which is $\cx^*$-equivariant with respect to the natural $\cx^*$-action on $\cx$, with a relatively very ample Cartier divisor $\CL$ to which the action $\cx^* \actson \CX$ linearises, such that $\pi^{-1} (1) \cong X $ and $\CL |_{\pi^{-1} (1)} \cong L^k$. The preimage of $0 \in \cx$, written $\CX_0 := \pi^{-1} (0)$, is called the \textbf{central fibre}.
	
	We say that $(\CX , \CL)$ is \textbf{product} if $\CX$ is isomorphic to $X \times \cx$, and \textbf{trivial} if it is $\cx^*$-equivariantly isomorphic to $X \times \cx$ (i.e.~$\CX \isom X \times \cx$ and $\cx^*$ acts trivially on $X$).
\end{definition}

Suppose that we have a very ample test configuration $(\mathcal{X} , \mathcal{L})$ of exponent $k$. By definition $(\mathcal{X} , \mathcal{L})$ is endowed with a $\cx^*$-action. There exists an embedding $\mathcal{X} \inj \prj (H^0 (X,L^k)^{\vee})$ by \cite[Proposition 3.7]{rt07} such that the generator of the $\cx^*$-action is given by $A_k \in \mathfrak{gl} (H^0 (X,L^k))$, acting linearly on $\prj (H^0 (X,L^k)^{\vee})$, in the sense that $\CX$ is equal to the flat closure of the $\cx^*$-orbit of $X \inj \prj (H^0 (X,L^k)^{\vee}))$ generated by $A_k$ (with the $\cx^*$-action $e^{A_k t}$). Moreover, its central fibre $\CX_0 := \pi^{-1} (0)$ is equal to the flat limit of this $\cx^*$-orbit (cf.~\cite[\S 6.2]{szebook}).

Given $(\CX , \CL)$, we can construct a sequence of very ample test configurations $(\CX , \CL^{\otimes m})$ for $m \in \mathbb{N}$, each of exponent $mk$. As above we can write it as a flat closure of the $\cx^*$-orbit of $X \inj \prj(H^0 (X,L^{mk})^{\vee})$ generated by $A_{mk} \in \mathfrak{gl} (H^0 (X,L^{mk}))$, say. Since the central fibre $\CX_0$ is the flat limit of the $\cx^*$-action generated by $A_{mk}$, there is a natural $\cx^*$-action $\cx^* \actson H^0 (\CX_0, \CL^{ \otimes k}|_{\CX_0})$ generated by $A_{mk} \in \mathfrak{gl}(H^0(X,L^{mk})) = \mathfrak{gl} (H^0 (\CX_0 , \CL^{\otimes k} |_{\CX_0}))$, by noting the isomorphism $H^0 (X, L^{mk}) \isom H^0 (\CX_0 , \CL^{\otimes k} |_{\CX_0})$. 

By Riemann--Roch and equivariant Riemann--Roch, we write
\begin{align}
	\mathrm{dim} H^0 (X , L^{mk}) &= a_0 (mk)^{n} + a_1 (mk)^{n-1} + \cdots, \label{rrdimhzxlkexp} \\
	\mathrm{tr} (A_{mk}) &= b_0 (mk)^{n+1} + b_1 (mk)^n + \cdots. \label{eqrrtrarkexp}
\end{align}
Observe that $a_0$ is equal to the volume $\int_X c_1 (L)^n / n!$ of $(X,L)$.

\begin{definition} \label{defchwbtestcfg}
Let $(\mathcal{X} , \mathcal{L})$ be a very ample test configuration for $(X,L)$ of exponent $k$. The \textbf{Chow weight} of $(\mathcal{X} , \mathcal{L})$ is defined by
\begin{equation*}
	\mathrm{Chow}_k (\mathcal{X} , \mathcal{L}) := k b_0 -  \frac{a_0 \tr (A_k)}{\mathrm{dim} H^0 (X,L^k)}. 
\end{equation*}
\end{definition}

\begin{definition} \label{defchstpsssus}
A polarised K\"ahler manifold $(X,L)$ is said to be:
\begin{enumerate}
\item \textbf{Chow semistable at the level $k$} if $\mathrm{Chow}_k (\mathcal{X} , \mathcal{L}) \ge 0$ for any very ample test configuration $(\mathcal{X} , \mathcal{L})$ of exponent $k$,
\item \textbf{Chow polystable at the level $k$} if $(X,L)$ is Chow semistable at the level $k$ and $\mathrm{Chow}_k (\mathcal{X} , \mathcal{L}) = 0$ holds if and only if $(\mathcal{X} , \mathcal{L})$ is product,
\item \textbf{Chow stable at the level $k$} if $(X,L)$ is Chow semistable at the level $k$ and $\mathrm{Chow}_k (\mathcal{X} , \mathcal{L}) = 0$ holds if and only if $(\mathcal{X} , \mathcal{L})$ is trivial,
\item \textbf{Chow unstable at the level $k$} if it is not Chow semistable at the level $k$,
\end{enumerate}
\end{definition}

\begin{remark} \label{rmchtccf}
It is well-known that the Chow stability as defined above is equivalent to the more conventional definition using the Chow form of the embedded manifold $X \inj \prj(H^0 (X,L^{k})^{\vee})$; see e.g.~\cite[\S 2]{fut11} or \cite[Proposition 2.11]{mumford77}.
\end{remark}

A stability notion that is relevant to the cscK metrics is the $K$-stability introduced by Tian \cite{Tian97} and Donaldson \cite{dontoric}.

\begin{definition} \label{defofdfinvalg}
	The \textbf{Donaldson--Futaki invariant} $DF(\CX , \CL)$ is defined as 
	\begin{equation*}
	DF(\CX , \CL) = \frac{a_1 b_0 - a_0 b_1}{a_0} = \lim_{k \to \infty} \mathrm{Chow}_{mk} (\CX , \CL^{\otimes m}).
	\end{equation*}
\end{definition}

$K$-stability can be defined analogously to Chow stability, using the Donaldson--Futaki invariant instead; a crucial difference is that we need to consider test configurations of \textit{all} exponents, as opposed to a fixed one, to check $K$-stability.

\begin{definition}
	A polarised K\"ahler manifold $(X,L)$ is said to be \textbf{$K$-semistable} if $DF(\CX , \CL) \ge 0$ holds for any test configuration. $(X,L)$ is \textbf{$K$-polystable} if it is $K$-semistable and $DF(\CX , \CL) = 0$ if and only if $(\CX , \CL)$ is product, \textbf{$K$-stable} if it is $K$-semistable and $DF(\CX , \CL) = 0$ if and only if $(\CX , \CL)$ is trivial.
\end{definition}

Theorems \ref{thdpr1} and \ref{thlzh} are known to imply that a polarised K\"ahler manifold $(X,L)$ is $K$-semistable, as proved by Ross--Thomas \cite{rt07}; Donaldson \cite{donlb} later gave an alternative proof using the Calabi energy. This result can be further strengthened to yield $K$-stability, if $\mathrm{Aut}_0 (X,L)$ is trivial, by the theorem of Stoppa \cite{stokst} which also relies on the result of Arezzo--Pacard \cite{AP1}. These results can be summarised as follows.

\begin{displaymath}
			\xymatrixcolsep{6pc}\xymatrixrowsep{3pc}\xymatrix{(X,L) \text{ has a cscK metric.} \ar@{=>}[r]^-{\text{Donaldson \cite{donproj1} }} \ar@{=>}[d]_-{\text{Donaldson \cite{donlb}}} & \rho_k (\omega_k) =  \cst \  k \gg 1 . \ar@{<=>}[d]^-{\text{Luo \cite{Luo}, Zhang \cite{zhang96} }}\\
		\text{$K$-semistability}  \ar@{=>}[d]_-{\text{Stoppa \cite{stokst} }}  \ar@{<=}[r]^-{\text{Ross--Thomas \cite{rt07} }} & \text{Asymptotic Chow stability}  \\
		\text{$K$-stability} &
		}
\end{displaymath}

An improvement of the result stated above, which proves the uniform $K$-stability by employing a variational approach, was obtained by Berman--Darvas--Lu \cite{BDL}. There are many related results, e.g.~\cite{BHJ2,DR17,SD1,SD2} amongst many others, but we omit the details.

We finally recall that balanced metrics are Fubini--Study metrics, i.e.~there exists a hermitian inner product on $\cx^{N_k} = H^0(X,L^k)$ such that the Kodaira embedding
\begin{equation*}
	\iota_k : X \inj \mathbb{P}(H^0(X,L^k)^{\vee})
\end{equation*}
is an isometry with respect to the balanced metric on $X$. In other words, there exists a positive definite hermitian form $H$ on $H^0(X,L^k)$ such that the balanced metric is equal to the pullback of the Fubini--Study metric on $\mathbb{P}(H^0(X,L^k)^{\vee})$ defined by $H$.

The balanced metric is a critical point of an energy functional called the balancing energy, say $Z_k : Y_k \to \rl$, where $Y_k := GL(N_k , \cx)) / U(N_k)$ is the set of all positive definite hermitian forms on $H^0 (X,L^k)$. An important feature of the balancing energy is that it is convex along geodesics in the symmetric space $Y_k$ with respect to the standard bi-invariant metric. Moreover it is well-known that the balancing energy is strictly convex along geodesics that is not contained in the $\mathrm{Aut}_0 (X,L)$-orbit, which in particular shows that $Z_k$ is strictly convex along geodesics if $\mathrm{Aut}_0 (X,L)$ is trivial. Since $Y_k$ with the bi-invariant metric is a complete Riemannian manifold, the results that we recall and prove in Section \ref{scgcfcrm} show that $(X,L)$ admits a balanced metric at the level $k$ if and only if its asymptotic slope at infinity is strictly positive; see Figure \ref{figpic}.

\bigskip

\begin{figure}
\centering
\begin{subfigure}{0.4\textwidth}
    \includegraphics[width=\textwidth]{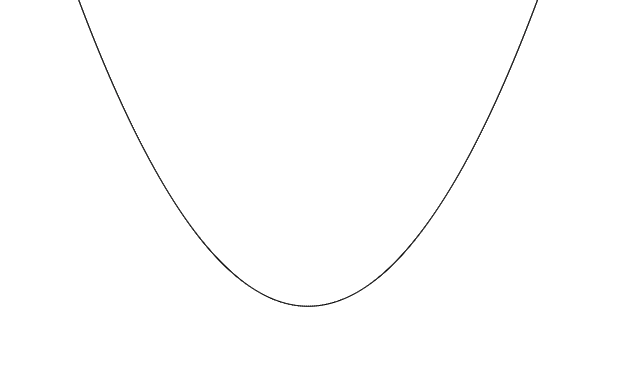}
    \caption{Convex function with a critical point}
    \label{fgstable}
\end{subfigure}
\hfill
\begin{subfigure}{0.4\textwidth}
    \includegraphics[width=\textwidth]{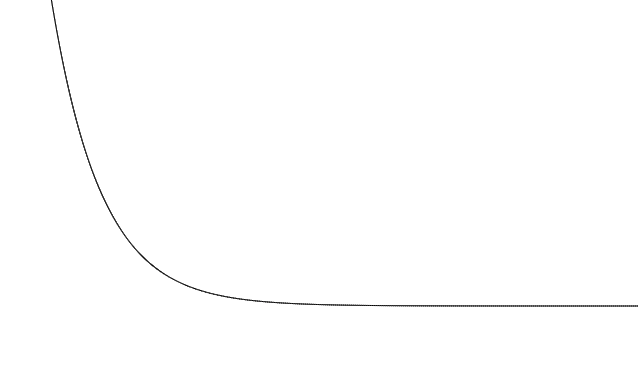}
    \caption{Bounded below but without a critical point}
    \label{fgss}
\end{subfigure}
\hfill
\begin{subfigure}{0.4\textwidth}
    \includegraphics[width=\textwidth]{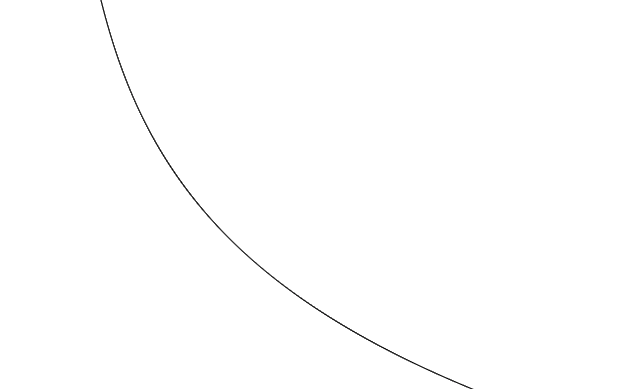}
    \caption{Unbounded below}
    \label{fguns}
\end{subfigure}
\caption{Asymptotic behaviours of convex functions}
\label{figpic}
\end{figure}

It turns out that the asymptotic slope of $Z_k$ at infinity agrees with the Chow weight (Definition \ref{defchwbtestcfg}), for geodesic rays in $Y_k$ generated by positive hermitian forms on $H^0 (X,L^k)$ with integral eigenvalues (see \cite[Proposition 3]{donlb}). We can interpret Theorem \ref{thlzh} as a claim that the asymptotic slope of $Z_k$ is positive along any geodesic rays in $Y_k$ if and only if it is positive along any geodesic rays generated by ``integral'' generators (which correspond to algebraic weights).

\section{Balanced metrics for extremal metrics}

When $\mathrm{Aut}_0 (X,L)$ is nontrivial, Theorem \ref{thdpr1} still holds as long as a series of integral invariants called the higher Futaki invariant vanishes, as proved by Futaki \cite{fut04ac} and Mabuchi \cite{mab04obs,mab05}. However, there are examples with non-vanishing higher Futaki invariants by Ono--Sano--Yotsutani \cite{osy} and Della Vedova--Zuddas \cite{dvz}, and for these manifolds we need to have a new definition of balanced metrics. It turns out that these results hold more widely for extremal metrics, which generalise cscK metrics, defined as follows.

\begin{definition}
		A K\"ahler metric $\omega \in c_1 (L)$ which satisfies 
		\begin{equation*}
		\bar{\partial}  \mathrm{grad}^{1,0}_{\omega}  S (\omega) =0 
		\end{equation*}
		is called \textbf{extremal} (i.e.~(1,0)-part of the gradient vector field $\textup{grad}_{\omega}  S (\omega) $ is a holomorphic vector field).
	\end{definition}
	
The extremal metric is a critical point of the Calabi energy, and can be regarded as a generalisation of the cscK metrics. When we have a non-cscK extremal metric, note that the automorphism group of $X$ must be non-discrete.

In what follows, we assume that $L$ is very ample and that there exists a faithful representation $\mathrm{Aut} (X,L) \inj GL(H^0(X,L))$ such that the associated linear action $\mathrm{Aut} (X,L) \actson \prj (H^0(X,L)^{\vee})$ induces the automorphism of the embedded submanifold $\iota (X) \subset \prj (H^0(X,L)^{\vee})$. It is well-known that this can be achieved by replacing $L$ by a higher tensor power.

It turns out there are multiple ways of generalising Theorem \ref{thdpr1} to the case when $\mathrm{Aut} (X,L)$ is nontrivial, as briefly recalled later. One version of them which is proved by the author is the following.

\begin{theorem} \emph{(H.~\cite{yhextremal})} \label{thextyh}
		Suppose that $X$ admits an extremal metric $\omega_{\mathrm{ext}} \in c_1 (L)$. Then, there exists $k_0 \in \nnb$ such that for any $k \in \nnb$ with $k \ge k_0$ there exists a K\"ahler metric $\omega_k \in c_1 (L)$ whose Bergman function $\rho_k (\omega_k )$ satisfies
		\begin{equation*}
			\bar{\partial} \mathrm{grad}_{\omega_k}^{1,0} \rho_k (\omega_k ) =0
		\end{equation*}
		and $\omega_k \to \omega_{\mathrm{ext}}$ in $C^{\infty}$ as $k \to \infty$.
\end{theorem}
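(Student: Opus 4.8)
The plan is to produce the metrics $\omega_k$ as solutions of a finite-dimensional moment-map equation on the space $Y_k = GL(N_k,\cx)/U(N_k)$ of hermitian inner products on $H^0(X,L^k)$, obtained by perturbing the extremal metric $\omega_{\mathrm{ext}}$, and then to invoke an implicit-function-theorem argument that is uniform in $k$. First I would fix the maximal torus $T$ (or the whole reductive part) of $\Aut(X,L)$ containing the extremal vector field, and work equivariantly throughout; the defining equation $\bar\partial\,\grad^{1,0}_{\omega_k}\rho_k(\omega_k)=0$ says precisely that the endomorphism of $H^0(X,L^k)$ associated (via the Fubini--Study/Bergman correspondence) to $\rho_k(\omega_k)$ lies in the Lie algebra of the automorphism group, i.e.\ it is the "relative balanced" condition of Mabuchi type. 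So the natural functional is a modified balancing energy $Z_k^{\mathrm{rel}}$ on $Y_k$ (the ordinary balancing energy corrected by a linear term coming from the extremal character / the $L^2$-projection onto $\aut(X,L)$), whose critical points are exactly the metrics we want, and which is still geodesically convex, strictly so transverse to the $\Aut_0(X,L)$-orbit.

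The key steps, in order, are: (1) set up $Z_k^{\mathrm{rel}}$ and compute its gradient and Hessian at the point of $Y_k$ induced by $\omega_{\mathrm{ext}}$ via the asymptotic expansion of the Bergman kernel (Theorem in \S1) together with its derivatives; the first-order term is governed by the Lichnerowicz operator $\lich{\omega_{\mathrm{ext}}}$ acting on the orthogonal complement of $\aut(X,L)$, which is invertible there. (2) Quantify the invertibility: show the linearised operator at $\omega_{\mathrm{ext}}$ has a lower bound on its smallest eigenvalue that is $O(k^{-\text{(fixed power)}})$, using the eigenvalue estimates for the Bergman-space Hessian from Donaldson's work and its extremal analogues (Phong--Sturm, Mabuchi). (3) Bound the error term: the "moment map" $\mu_k$ evaluated at the inner product coming from $\omega_{\mathrm{ext}}$ is $O(k^{-\infty})$ or at least $O(k^{-N})$ for every $N$, again from the Bergman kernel expansion with all lower-order coefficients, since $\omega_{\mathrm{ext}}$ solves the limiting equation. (4) Feed (2) and (3) into a quantitative inverse function theorem on $Y_k$ (as in Donaldson \cite{donproj1}, or the version using geodesic convexity from \S\ref{scgcfcrm}) to get a genuine solution $H_k$ within distance $O(k^{-N})$ of the $\omega_{\mathrm{ext}}$-point. (5) Translate $H_k$ back to a \kah metric $\omega_k$, and upgrade the $C^0$/quantitative closeness to $C^\infty$ convergence $\omega_k\to\omega_{\mathrm{ext}}$ by elliptic bootstrapping, since the Bergman function depends smoothly and with uniformly controlled derivatives on the metric.

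I would run all of this $\Aut_0(X,L)$-equivariantly: restrict $Z_k^{\mathrm{rel}}$ to the submanifold of $Y_k$ of inner products invariant under the maximal compact subgroup, so that the troublesome kernel directions (the holomorphic vector fields) are quotiented out and the restricted Hessian is genuinely positive definite with the eigenvalue bound from (2). The appearance of the extremal vector field in the equation is handled by the standard trick of subtracting off its contribution, i.e.\ replacing $\rho_k$ by $\rho_k$ minus (the potential of) the holomorphic part, which is exactly the content of passing from "balanced" to "relatively balanced"; one checks this correction is itself controlled by the Bergman expansion.

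The main obstacle, as usual for this circle of results, is step (2)--(3): establishing that the error $\mu_k(\omega_{\mathrm{ext}})$ decays faster than the reciprocal of the smallest eigenvalue of the linearisation, i.e.\ a genuine loss-of-derivatives-free quantitative estimate that is uniform in $k$. This requires the full asymptotic expansion of $\rho_k$ together with its variation (the "derivative of the Bergman kernel" expansions of Ma--Marinescu type), and careful bookkeeping of the dependence of all constants on $k$; the extra subtlety over the cscK case is that one must control these estimates on the $\aut(X,L)$-complement and verify that the extremal character does not interfere with the decay. Once the quantitative estimates are in place, the rest is a fairly mechanical application of the implicit function theorem / the geodesic-convexity criterion of \S\ref{scgcfcrm} and standard elliptic regularity.
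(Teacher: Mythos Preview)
The paper does not contain a proof of this theorem. Sections~1--3 are explicitly a survey (see the abstract), and Theorem~\ref{thextyh} is stated with attribution to \cite{yhextremal} and then discussed only in terms of its consequences for relative stability; no argument is given in the present paper. So there is no ``paper's own proof'' to compare your proposal against.

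That said, your outline is broadly the standard Donaldson-type quantisation scheme adapted to the extremal/relative setting, and it is in the right spirit for how results of this kind are proved in the literature (including \cite{yhextremal}). The identification of the equation $\bar\partial\,\grad^{1,0}_{\omega_k}\rho_k(\omega_k)=0$ with a relative moment-map condition, the use of a modified balancing energy on $Y_k$, the equivariant reduction to kill the $\aut(X,L)$-kernel, and the quantitative implicit function theorem fed by Bergman-kernel asymptotics are all the expected ingredients. One caution: your step~(3) asserts that the moment map at $\omega_{\mathrm{ext}}$ is $O(k^{-N})$ for all $N$; in practice the raw error from the Bergman expansion is only $O(k^{-2})$ relative to the leading term, and one typically needs to first construct a formal approximate solution to arbitrary order (by iteratively correcting $\omega_{\mathrm{ext}}$ using the expansion) before the implicit function theorem closes. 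Without that intermediate step the eigenvalue lower bound of step~(2), which is only polynomial in $k^{-1}$, will not beat the error.
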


As we saw in the previous section, the result above has implications to stability; here we need to consider the ``relative'' versions of the stability notions defined above, such as the relative $K$-stability defined by Sz\'ekelyhidi \cite{sze2007}, which essentially subtracts the contribution of the automorphism group. The statement is more complicated, and other generalisations of Theorem \ref{thdpr1} give slightly different conclusions. We summarise the known results in the following diagram; see the references cited below for the precise definitions and statements.

\begin{displaymath}
		\xymatrixcolsep{6pc}\xymatrixrowsep{4pc}\xymatrix{ & \txt{Asymptotic relative \\ Chow stability} \ar@{=>}[d]^-{\text{Mabuchi \cite{mab18} }} \\
		 (X,L) \text{ has an extremal metric.} \ar@{=>}@<4pt>[ur]^-{\text{Mabuchi \cite{mab18} \ \ \ \ \ \ }}_-{\text{\ \ \ \ \ \ \ \ Seyyedali \cite{say17} }} \ar@{=>}[r]_-{\text{Mabuchi \cite{mab04ext,mab05,mab09,mab11}}} \ar@{=>}[dr]_-{\text{H. \cite{yhextremal} }} \ar@{=>}[d]_{\text{Stoppa--Sz\'ekelyhidi \cite{stosze} }} & \txt{Asymptotic weak relative \\ Chow stability}  \ar@{<=}[d]^-{\text{H. \cite{yhstab}}  } \\
		   \text{Relative $K$-semistability} \ar@{=>}[d]_{\text{Stoppa--Sz\'ekelyhidi \cite{stosze}} } & \bar{\partial} \mathrm{grad}_{\omega_k}^{1,0} \rho_k (\omega_k ) =0 \  k \gg 1. \ar@{=>}[l]^{\text{H. \cite{yhstab} }} \\
	 	  \text{Relative $K$-stability} &
		}
\end{displaymath}
	
There is also a closely related result by Sano--Tipler \cite{santip} which is different from the above results. The results of Mabuchi \cite{mab04ext,mab05,mab09,mab11,mab18}, Seyyedali \cite{say17}, Sano--Tipler \cite{santip} can all be stated in terms of the ``weighted'' Bergman function, but not quite the Bergman function itself (they are more natural in terms of the relative moment map). The reader is referred to the papers cited above for precise statements, and to \cite[Section 6]{yhstab} and \cite[Section 6]{yhextremal} for the comparison of these results, which is rather technical; the only remark that we make here is that it is proved in \cite{yhstab} that Theorem \ref{thextyh} implies the asymptotic weak relative Chow stability and relative $K$-semistability, as indicated in the diagram above. Note also that there is another proof due to Dervan \cite{der18} for the extremal metrics implying relative $K$-stability, which applies more generally to compact (non-projective) K\"ahler manifolds.

\section{Anticanonically balanced metrics} \label{scacbal}

There is another version of balanced metrics that is adapted to Fano manifolds, which was introduced by Donaldson \cite[\S 2.2.2]{donnum09} and studied further by Berman--Boucksom--Guedj--Zeriahi \cite{BBGZ}.

	Let $(X, -K_X)$ be an (anticanonically polarised) Fano manifold. Recall that a hermitian metric $h$ on $-K_X$ naturally defines a volume form $d \mu_h$ on $X$ via the natural isomorphism $\mathcal{H}om_{C^{\infty}_X} ((-K_X) \otimes \overline{(-K_X)} , \cx ) \isom K_X \otimes \overline{K_X}$.
	\begin{definition}
		Let $\{ \sigma_i \}_{i=1}^{N_k}$ be a basis for $H^0 (X, (-K_X)^k)$ that is orthonormal with respect to the inner product $\int_X h^k ( \cdot , \cdot  ) d \mu_h $ (note the volume form).
		
		The \textbf{anticanonical Bergman function} $\rho^{\mathrm{ac}}_k (\omega_h ) \in C^{\infty} (X, \rl)$ is defined to be
		\begin{equation*}
			\rho^{\mathrm{ac}}_k (\omega_h ) := \sum_{i=1}^{N_k} | \sigma_i|^2_{h^k} .
		\end{equation*}
	\end{definition}

	\begin{definition}
		A K\"ahler metric $\omega_h \in c_1 (-K_X)$ satisfying $\rho^{\mathrm{ac}}_k (\omega_h ) = \cst$ is said to be \textbf{anticanonically balanced} at level $k$.
	\end{definition}

The ``anticanonical'' version of Donaldson's quantisation was established by Berman--Witt Nystr\"om \cite{BWN14}.

\begin{theorem} \emph{(Berman--Witt Nystr\"om \cite{BWN14})}
		Suppose that a Fano manifold $(X, -K_X)$ admits a K\"ahler--Einstein metric $\omega_{\mathrm{KE}} \in c_1 (-K_X)$ and its automorphism groups is discrete. Then there exists a sequence $\{ \omega_k \}_k$ of K\"ahler metrics in $c_1(-K_X)$ weakly converging to $\omega_{\mathrm{KE}}$ in the sense of currents, whose anticanonical Bergman function is constant for all large enough $k$, i.e.~$\rho^{\mathrm{ac}}_k (\omega_k) = \cst$ for $k \gg 1$.
\end{theorem}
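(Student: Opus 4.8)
The natural approach, in the spirit of the variational viewpoint of this paper, is to realise the anticanonically balanced metrics as critical points of a geodesically convex functional on a finite-dimensional symmetric space and then invoke the criterion of Section~\ref{scgcfcrm}. Following Donaldson \cite{donnum09} and Berman--Witt Nystr\"om \cite{BWN14} (see also Berman--Boucksom--Guedj--Zeriahi \cite{BBGZ}), I would first introduce the \emph{quantised Ding functional} on the space $Y_k := GL(N_k, \cx)/U(N_k)$ of positive definite hermitian inner products on $H^0 (X, (-K_X)^k)$. For $H \in Y_k$ write $\omega_H \in c_1 (-K_X)$ for the pullback of the Fubini--Study metric under the Kodaira embedding associated to $H$; then, up to a positive normalising constant and a choice of reference inner product, set
\begin{equation*}
	\CD_k (H) := - \tfrac{1}{k} \log \det (H) - \log \int_X \rho^{\mathrm{ac}}_k (\omega_H)^{-1/k} \, d \mu_{\omega_H} .
\end{equation*}
This is invariant under $H \mapsto \lambda H$ and so descends to $Y_k / \rl_{>0}$, and a computation of the first variation shows that $H$ is a critical point of $\CD_k$ if and only if $\rho^{\mathrm{ac}}_k (\omega_H) = \cst$, i.e.\ $\omega_H$ is anticanonically balanced at level $k$.

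The second step is geodesic convexity of $\CD_k$. Along a geodesic $t \mapsto H_t$ of $Y_k$ in the bi-invariant metric one has $\log \det (H_t) = \log \det (H_0) + ct$ for a constant $c$, so the first term of $\CD_k$ is affine, and convexity of $\CD_k$ reduces to convexity of $t \mapsto - \log \int_X \rho^{\mathrm{ac}}_k (\omega_{H_t})^{-1/k} \, d\mu_{\omega_{H_t}}$. This follows from Berndtsson's theorem on the subharmonic variation of Bergman kernels, applied to the family of metrics interpolated by the geodesic --- it is the quantised counterpart of Berndtsson's convexity of the continuous Ding functional. Since $Y_k$ with the bi-invariant metric is a complete Riemannian manifold, the criterion proved in Section~\ref{scgcfcrm} applies: $\CD_k$ has a critical point if and only if its asymptotic slope is strictly positive along every geodesic ray.

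The third step is to verify this positivity of slopes for all $k$ sufficiently large. By convexity, and a density/continuity argument as in \cite{donlb}, it is enough to treat rays $t \mapsto e^{tA}H_0$ whose generator $A$ has integral eigenvalues, which correspond to test configurations for $(X, -K_X)$; along such a ray the asymptotic slope of $\CD_k$ is a quantised Ding invariant of the test configuration, and for $k$ large these invariants are uniformly close to the corresponding continuous Ding invariant. Now, because $(X, -K_X)$ admits a K\"ahler--Einstein metric $\omega_{\mathrm{KE}}$ and $\Aut_0 (X, -K_X)$ is trivial, the continuous Ding functional $\CD_\infty$ is proper, indeed $d_1$-coercive --- this is Tian's properness conjecture in the Fano case, established through the work of Berman, of Darvas--Rubinstein \cite{DR17} and of Berman--Darvas--Lu \cite{BDL} --- and properness forces the continuous Ding invariant to be bounded below by a fixed positive multiple of a norm of the test configuration, uniformly over all nontrivial ones. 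Combining this with the uniform comparison of quantised and continuous slopes, one gets that for $k \gg 1$ the asymptotic slope of $\CD_k$ is bounded below by a fixed positive multiple of the norm of the ray, in particular strictly positive; hence $\CD_k$ has a critical point $H_k$ for all $k \gg 1$, giving an anticanonically balanced metric $\omega_k := \omega_{H_k}$ at level $k$.

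Finally, for the convergence $\omega_k \to \omega_{\mathrm{KE}}$: the critical point $H_k$ minimises the convex functional $\CD_k$, so $\CD_k (H_k) = \inf_{Y_k} \CD_k$, and this infimum converges to $\inf \CD_\infty = \CD_\infty (\omega_{\mathrm{KE}})$ by the convergence of the quantised functionals to $\CD_\infty$. Properness of $\CD_\infty$ makes the relevant sublevel sets compact in the weak topology, so a subsequence of $\{ \omega_k \}$ converges weakly to some $\omega_\infty$, which by lower semicontinuity of $\CD_\infty$ is a minimiser, hence equals $\omega_{\mathrm{KE}}$ by the Bando--Mabuchi uniqueness theorem; since the limit is independent of the subsequence, $\omega_k \to \omega_{\mathrm{KE}}$ weakly in the sense of currents. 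The main obstacle in this programme is the uniform comparison of the quantised and continuous slopes --- equivalently, transferring uniform Ding-stability from level $\infty$ down to all large finite levels: a pointwise Tian--Yau--Zelditch-type expansion of $\rho^{\mathrm{ac}}_k$ does not suffice, and one needs genuine control of the probability measures $\rho^{\mathrm{ac}}_k (\omega_H)^{-1/k} \, d\mu_{\omega_H}$ along unbounded families of metrics, e.g.\ via partial $C^0$-type estimates or the large-deviation estimates underlying \cite{BWN14} and \cite{BBGZ}. By comparison the convexity step is soft once Berndtsson's theorem is available, and the passage from positivity of slopes to existence of a critical point is exactly the content of Section~\ref{scgcfcrm}.
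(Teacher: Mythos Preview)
The paper does not contain a proof of this statement. The theorem is quoted in Section~\ref{scacbal} as a result of Berman--Witt Nystr\"om \cite{BWN14}, in the survey portion of the paper, and no argument is supplied; the only original proofs in the paper are those in Section~\ref{scgcfcrm} (Lemmas~\ref{lmprestift} and~\ref{lmtccvsp}, Theorems~\ref{thcxffmp} and~\ref{thcxffmpg}, Corollary~\ref{crcxffmpg}). There is therefore nothing to compare your proposal against.

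That said, a brief comment on your sketch. Your outline is broadly in the spirit of the variational framework surrounding \cite{BWN14,BBGZ,acbal}, and you correctly identify the serious step: passing the coercivity of the continuous Ding functional down to a uniform positive lower bound on the slopes of the quantised functionals $\CD_k$ for all large $k$. Your proposed route through ``uniform comparison of quantised and continuous slopes'' along test configurations, combined with a density reduction to integral generators, is not how \cite{BWN14} actually proceeds; they instead establish a $\Gamma$-convergence of the quantised functionals to the continuous Ding functional and use properness of the latter directly to trap minimisers, rather than comparing asymptotic slopes ray by ray. In particular, the reduction ``by a density/continuity argument as in \cite{donlb}'' to rays with integral generators does not by itself give a \emph{uniform} positive lower bound on slopes (which is what Theorem~\ref{thcxffmp}, item~(v), requires to conclude existence), since the map from generator to asymptotic slope is only lower semicontinuous in general; closing that gap is essentially the content of the author's Theorem~\ref{mthst} in \cite{acbal}, and is nontrivial. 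So while your plan is coherent as a strategy, the step you flag as ``the main obstacle'' is indeed where the real work lies, and the argument you sketch for it is not complete.
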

	
While we focus on the case when 	$\mathrm{Aut}_0 (X) = \mathrm{Aut}_0 (X, -K_X)$ is trivial, Berman--Witt Nystr\"om \cite{BWN14} proved an analogous result for the K\"ahler--Ricci $g$-solitons when $\mathrm{Aut}_0 (X)$ is nontrivial. They proved the convergence in terms of currents, but this can be improved to the smooth convergence by Takahashi \cite[Theorem 1.3 with $N=1$]{tak19} and Ioos \cite{Ioos20,Ioos21}. The result recently proved by the author is the following, which can be regarded as an anticanonical version of Theorem \ref{thlzh}.

\begin{theorem} \emph{(H.~\cite{acbal})} \label{mthst}
Let $k \in \mathbb{N}$ be large enough such that $(-K_X)^k$ is very ample. A Fano manifold $(X , -K_X)$ with the discrete automorphism group admits a unique anticanonically balanced metric at level $k$ if and only if it satisfies the following stability condition: for any very ample test configuration $( \CX , \CL )$ for $(X , -K_X)$ of exponent $k$ we have $\mathrm{Ding} ( \CX , \CL ) + \mathrm{Chow}_k ( \CX , \CL ) \ge 0$, with equality if and only if $( \CX , \CL )$ is trivial.
\end{theorem}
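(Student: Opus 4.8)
The plan is to follow the same variational strategy that underlies Theorem \ref{thlzh}, now applied to the \emph{Ding-type} functional adapted to the anticanonical polarisation. First I would introduce the space $Y_k = GL(N_k,\cx)/U(N_k)$ of positive hermitian forms on $H^0(X,(-K_X)^k)$ and define on it an energy functional $\mathcal{D}_k$ whose Euler--Lagrange equation is exactly $\rho^{\mathrm{ac}}_k(\omega_h) = \cst$; concretely $\mathcal{D}_k$ should be the sum of the balancing-type energy $Z_k$ (whose gradient is $\rho_k^{\mathrm{ac}} - \text{const}$, after identifying a form $H \in Y_k$ with the Fubini--Study-type metric it induces and the corresponding volume form $d\mu_h$) and a Ding-type term built from $-\frac{1}{k}\log \int_X e^{-k\phi_H}$, which contributes the $\mathrm{Ding}$ piece. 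The key analytic input is that $\mathcal{D}_k$ is geodesically convex on $Y_k$ with respect to the bi-invariant metric: convexity of $Z_k$ is classical (as recalled in the introduction), and convexity of the Ding-type term along one-parameter subgroups $e^{tA}$ follows from Hölder's inequality / the log-convexity of $t \mapsto \log \int_X e^{-k\phi_{e^{tA}H}}d\mu$, since along such a geodesic $\phi$ depends affinely (up to the normalisation) on $t$ in the relevant fibrewise sense. One then checks strict convexity modulo the $\Aut_0(X,-K_X)$-action, so that under the discreteness hypothesis $\mathcal{D}_k$ is strictly convex.

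Next I would invoke the general principle stated in the abstract and Section \ref{scgcfcrm}: a geodesically convex function on a complete Riemannian manifold (here $Y_k$ with the symmetric-space metric, which is complete) attains a critical point if and only if its asymptotic slope along every geodesic ray is strictly positive, and strict convexity then forces the critical point to be unique. So the problem reduces to computing the asymptotic slope of $\mathcal{D}_k$ along an arbitrary geodesic ray. Geodesic rays in $Y_k$ are generated by hermitian endomorphisms $A$ of $H^0(X,(-K_X)^k)$; for $A$ with \emph{integral} eigenvalues, the ray corresponds to a very ample test configuration $(\CX,\CL)$ of exponent $k$ (via the flat closure of the $\cx^*$-orbit, as recalled after Definition \ref{defchstpsssus}), and I would show the slope equals $\mathrm{Ding}(\CX,\CL) + \mathrm{Chow}_k(\CX,\CL)$: the $Z_k$-part gives the Chow weight exactly as in Donaldson's computation \cite[Proposition 3]{donlb}, and the Ding-part gives the Ding invariant, by the standard identification of the asymptotic slope of the Ding functional along a test configuration with $\mathrm{Ding}(\CX,\CL)$ (i.e.\ the log canonical threshold / $L^{\mathrm{NA}}$-type quantity of the central fibre). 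A density/continuity argument in $A$, plus the fact that the slope is a continuous (piecewise linear, rational) function of the eigenvalue data, extends the equivalence "slope $>0$ for all rays" $\Longleftrightarrow$ "slope $>0$ for all integral rays" $\Longleftrightarrow$ "$\mathrm{Ding}(\CX,\CL)+\mathrm{Chow}_k(\CX,\CL) > 0$ for all nontrivial very ample test configurations of exponent $k$", where the equality-iff-trivial clause matches the vanishing-of-slope-iff-the-ray-is-trivial clause coming from strict convexity modulo automorphisms.

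Assembling these: existence and uniqueness of the anticanonically balanced metric at level $k$ $\iff$ $\mathcal{D}_k$ has a (unique) critical point $\iff$ the asymptotic slope of $\mathcal{D}_k$ is strictly positive along every nontrivial geodesic ray $\iff$ the stated inequality $\mathrm{Ding}(\CX,\CL)+\mathrm{Chow}_k(\CX,\CL) \ge 0$ holds with equality only for trivial $(\CX,\CL)$. One subtlety to handle carefully is the matching of "trivial test configuration" with "geodesic ray along which $\mathcal{D}_k$ is constant / contained in the $\Aut_0$-orbit" — since $\Aut_0(X,-K_X)$ is assumed discrete this degeneracy locus should be empty for nontrivial rays, but one must rule out the possibility of a nontrivial test configuration with a product-type central fibre contributing zero slope, which is exactly where discreteness of the automorphism group is used.

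I expect the main obstacle to be the slope computation for the Ding-type term: unlike the $Z_k$-part, where Donaldson's argument is purely linear-algebraic on $H^0(X,(-K_X)^k)$, the Ding term involves $\log \int_X e^{-k\phi}d\mu$ and its asymptotics as $t \to \infty$ require controlling the singularity type of the limiting metric on the central fibre $\CX_0$ — i.e.\ identifying the limit with the appropriate non-Archimedean Ding functional / log canonical threshold of the degeneration. Making this rigorous at finite level $k$ (rather than in the $k\to\infty$ limit) means one has to work with the concrete fibrewise integrals and the Lelong-number / integrability estimates for the sections $\sigma_i$ degenerating along the test configuration; this is the technical heart where the Fano (anticanonical) structure, via the identification of volume forms with metrics on $-K_X$, is essential, and where I would expect to spend most of the effort.
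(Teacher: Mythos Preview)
Your proposal is correct and follows essentially the same approach the paper sketches at the end of Section~\ref{scacbal}: the anticanonical balancing energy $Z^{\mathrm{ac}}_k$ on $Y_k$ is geodesically (strictly) convex, so by the results of Section~\ref{scgcfcrm} a critical point exists iff the asymptotic slope is positive along every geodesic ray, and the main work---as you correctly anticipate---is identifying this slope with $\mathrm{Ding}(\CX,\CL)+\mathrm{Chow}_k(\CX,\CL)$ for integral generators (Saito--Takahashi \cite{st19}) and then passing from integral to arbitrary rays. One small caveat: it is the \emph{full} functional $\mathcal{D}_k$ whose Euler--Lagrange equation is $\rho^{\mathrm{ac}}_k=\mathrm{const}$, not just the ``$Z_k$-part''; the decomposition you want is into a $\log\det$/Aubin--Yau--type piece (whose slope gives $\mathrm{Chow}_k$) and the Ding piece $-\log\int_X e^{-\phi_H}$ (whose slope gives $\mathrm{Ding}$), with the anticanonically balanced condition arising from their combined first variation.
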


One direction of the above result, i.e.~the existence of anticanonically balanced metrics implying the stated stability condition, was proved by Saito--Takahashi \cite[Theorem 1.2]{st19}. Thus the main point of the above theorem is the stability implying the anticanonically balanced metrics, although the proof in \cite{acbal} easily establishes both directions.

The definition of the Ding invariant that appeared in the above statement is as follows.

\begin{definition} \label{dfding}
	Let $(\CX , \CL)$ be a very ample test configuration for $(X , -K_X)$ of exponent $k$, and $\nu : \CX^{\nu} \to \CX$ be its normalisation with $\CL^{\nu} := \nu^* \CL$. Let $D_{(\CX^{\nu} , \CL^{\nu})}$ be a $\rtn$-divisor on $\CX^{\nu}$, whose support is contained in $\CX^{\nu}_0 := (\nu \circ \pi )^{-1} (0)$, such that $-k (K_{\CX^{\nu} / \cx } + D_{(\CX^{\nu} , \CL^{\nu})})$ is a Cartier divisor corresponding to $\CL^{\nu}$; it is well-known that such a $\rtn$-divisor $D_{(\CX^{\nu} , \CL^{\nu})}$ exists uniquely. The \textbf{Ding invariant} of $(\CX , \CL)$ is a real number defined by
	 \begin{equation*}
	 	\mathrm{Ding} (\CX , \CL) := - \frac{b_0}{a_0} - 1 + \mathrm{lct} (\CX^{\nu} , D_{(\CX^{\nu} , \CL^{\nu})} ; \CX^{\nu}_0),
	 \end{equation*}
	 where $\mathrm{lct} (\CX^{\nu} , D_{(\CX^{\nu} , \CL^{\nu})} ; \CX^{\nu}_0)$ is the log canonical threshold of $\CX^{\nu}_0$ with respect to $(\CX^{\nu} , D_{(\CX^{\nu} , \CL^{\nu})})$, which is defined as
	 \begin{equation*}
		\mathrm{lct} (\CX^{\nu} , D_{(\CX^{\nu} , \CL^{\nu})} ; \CX^{\nu}_0) := \sup \{ c \in \rl \mid (\CX^{\nu} , D_{(\CX^{\nu} , \CL^{\nu})} +c \CX^{\nu}_0) \text{ is sub log canonical}. \} .
	\end{equation*}
\end{definition}

We can define the Ding stability by using the above invariant, which is an important stability notion concerning the K\"ahler--Einstein metrics, but we omit the details.

The main application of Theorem \ref{mthst}, combined with a result by Rubinstein--Tian--Zhang \cite{rtz}, is the following.

\begin{corollary} \emph{(Rubinstein--Tian--Zhang \cite{rtz}, H.~\cite{acbal})}\label{mthstc}
	Let $(X,-K_X)$ be a Fano manifold with the discrete automorphism group. For $k \in \mathbb{N}$ large enough such that $(-K_X)^k$ is very ample, the $\delta_k$-invariant of Fujita--Odaka satisfies $\delta_k >1$ if and only if $\mathrm{Ding} ( \CX , \CL ) + \mathrm{Chow}_k ( \CX , \CL ) >0$ for any nontrivial very ample test configuration $( \CX , \CL )$ for $(X , -K_X)$ of exponent $k$.
\end{corollary}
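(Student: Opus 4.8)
The plan is to obtain the Corollary as a formal consequence of Theorem \ref{mthst} combined with the characterisation of the $\delta_k$-invariant due to Rubinstein--Tian--Zhang \cite{rtz}. Recall from \cite{rtz} that, for a Fano manifold $(X,-K_X)$ with discrete automorphism group and for $k$ such that $(-K_X)^k$ is very ample, the condition $\delta_k(X) > 1$ (with $\delta_k$ defined via $k$-basis type $\rtn$-divisors of $-K_X$) is equivalent to the properness (coercivity) of the quantised Ding functional $\mathcal{D}_k$ on the space of Fubini--Study metrics on $(X,(-K_X)^k)$ modulo scaling, i.e.\ on $Y_k / \rl$; since $\mathcal{D}_k$ is convex along Bergman geodesics and, when $\mathrm{Aut}_0(X)$ is trivial, strictly convex along every nontrivial one, properness is in turn equivalent to the existence of a (necessarily unique) critical point, which is precisely an anticanonically balanced metric at level $k$ in the sense of Section \ref{scacbal}. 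Thus the first step is to record the equivalence
\[
\delta_k(X) > 1 \iff (X,-K_X) \text{ admits a unique anticanonically balanced metric at level } k.
\]

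The second step is to invoke Theorem \ref{mthst}, which identifies the right-hand side above with the stability condition: $\mathrm{Ding}(\CX,\CL) + \mathrm{Chow}_k(\CX,\CL) \ge 0$ for every very ample test configuration $(\CX,\CL)$ for $(X,-K_X)$ of exponent $k$, with equality if and only if $(\CX,\CL)$ is trivial. It then remains only to check that this is the same as the inequality stated in the Corollary, namely that $\mathrm{Ding}(\CX,\CL) + \mathrm{Chow}_k(\CX,\CL) > 0$ for every \emph{nontrivial} such $(\CX,\CL)$. For this one computes the two invariants on the trivial test configuration $\CX = X \times \cx$: the generator $A_k$ vanishes, so $\tr(A_k) = 0$ and all the coefficients $b_i$ in \eqref{eqrrtrarkexp} vanish, whence $\mathrm{Chow}_k = k b_0 - a_0 \tr(A_k)/\dim H^0(X,(-K_X)^k) = 0$; moreover $\CX$ is already normal, the boundary divisor $D_{(\CX^{\nu} , \CL^{\nu})}$ is zero, and $\mathrm{lct}(\CX, 0 ; \CX_0) = 1$ because $\CX_0 = X \times \{0\}$ is a smooth prime divisor, so $\mathrm{Ding}(\CX,\CL) = - b_0/a_0 - 1 + 1 = 0$. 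Hence $\mathrm{Ding} + \mathrm{Chow}_k$ vanishes on the trivial test configuration, and "$\ge 0$ always, with equality iff trivial" is logically equivalent to "$> 0$ for every nontrivial very ample test configuration of exponent $k$". Chaining the equivalences from the two steps gives the Corollary in both directions; note that (since $\mathrm{Aut}(X)$ is a linear algebraic group with trivial identity component, hence finite) there are no nontrivial product test configurations, so the "stable" rather than merely "polystable" formulation is consistent.

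The only genuinely delicate point is the input from \cite{rtz}: one must match their normalisation of the quantised Ding functional $\mathcal{D}_k$ and their notion of balanced metric with the anticanonical Bergman function $\rho^{\mathrm{ac}}_k(\omega_h) = \cst$ used here, and verify that the critical point equation of $\mathcal{D}_k$ on $Y_k$ is exactly this equation; once this dictionary is in place — together with the standard fact that a geodesically convex, proper function on the complete symmetric space $Y_k/\rl$ attains its infimum at a unique critical point, which is a special case of the circle of ideas developed in Section \ref{scgcfcrm} — the remainder is a formal combination of Theorem \ref{mthst} with \cite{rtz}, the single computational verification being the vanishing of $\mathrm{Ding} + \mathrm{Chow}_k$ on the trivial test configuration carried out above.
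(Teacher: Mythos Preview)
Your proposal is correct and follows essentially the same approach as the paper: the paper explicitly states that the proof ``relies on Theorem \ref{mthst} and \cite[Theorem 2.3]{rtz} which concern the anticanonically balanced metrics,'' and the accompanying diagram makes the two-step chain $\{\delta_k > 1\} \Leftrightarrow \{\text{anticanonically balanced metric exists}\} \Leftrightarrow \{\mathrm{Ding} + \mathrm{Chow}_k > 0\}$ explicit. Your additional bookkeeping (the vanishing of $\mathrm{Ding} + \mathrm{Chow}_k$ on the trivial test configuration, and the remark that discreteness of $\mathrm{Aut}_0(X)$ rules out nontrivial product test configurations) fills in details the paper leaves implicit, but the structure is the same.
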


The $\delta_k$-invariant is an algebraic invariant defined by Fujita--Odaka \cite{fo18} and the above statement is purely algebro-geometric, but it seems that no purely algebro-geometric proof is known at the moment of writing this paper; the proof in \cite{acbal} relies on Theorem \ref{mthst} and \cite[Theorem 2.3]{rtz} which concern the anticanonically balanced metrics in differential geometry. The summary of the results so far are as follows.

\begin{displaymath}
		\xymatrixcolsep{10pc}\xymatrixrowsep{8pc}\xymatrix{ 
		\mathrm{Ding} (\CX , \CL) + \mathrm{Chow}_k (\CX , \CL) >0 \ar@{<=}@< 4pt>[d]^-{\text{Saito--Takahashi \cite{st19} }} \ar@{=>}@<-4pt>[d]_-{\text{H.~\cite{acbal} }} \ar@{<=>}[r]_-{\text{No algebraic proof known}} & \delta_k > 1 \\
		\exists \text{ anticanonically balanced metric}  \ar@{<=>}[ur]_(.4){\quad \quad \quad \text{Rubinstein--Tian--Zhang \cite{rtz} }} & }
\end{displaymath}

Note that an analogous result for K\"ahler--Einstein metrics and stability is known, in which we replace $\delta_k$ by the delta invariant $\delta:= \lim_{k \to \infty} \delta_k$ and $\mathrm{Ding} (\CX , \CL) + \mathrm{Chow}_k (\CX , \CL) >0$ by the uniform Ding stability; see Berman--Boucksom--Jonsson \cite{BBJ}. In this case, the equivalence between $\delta >1$ and the uniform Ding stability can be proved by methods in algebraic geometry \cite{bj20,fo18}.

Similarly to the usual balanced metrics, the anticanonically balanced metrics are Fubini--Study metrics. There exists an ``anticanonical'' version of the balancing energy, say $Z^{\mathrm{ac}}_k : Y_k \to \rl$, where $Y_k = GL(N_k , \cx)) / U(N_k)$ is the set of all positive definite hermitian forms on $H^0 (X,(- K_X)^k)$. $Z^{\mathrm{ac}}_k$ is convex along geodesics on $Y_k$ and strictly convex along geodesics that are not contained in the $\mathrm{Aut}_0 (X)$-orbit. Thus, just as discussed at the end of Section \ref{scitr}, the anticanonically balanced metric exists if and only if the asymptotic slope of $Z^{\mathrm{ac}}_k$ is strictly positive along geodesics (by the results in Section \ref{scgcfcrm}). Similarly to the case of the usual balanced metrics, the asymptotic slope of $Z^{\mathrm{ac}}_k$ at infinity agrees with the invariant $\mathrm{Ding} (\CX , \CL) + \mathrm{Chow}_k (\CX , \CL)$, which was first observed by Saito--Takahashi \cite{st19}, for geodesic rays in $Y_k$ generated by positive hermitian forms on $H^0 (X,(-K_X)^k)$ with integral eigenvalues. Theorem \ref{mthst} is proved by showing that the asymptotic slope of $Z^{\mathrm{ac}}_k$ is positive along any geodesic rays in $Y_k$ if and only if it is positive along any geodesic rays generated by integral generators.

\section{Geodesically convex functions on complete Riemannian manifolds} \label{scgcfcrm}

The balancing energy and its anticanonical version play a very important role in the study of (anticanonically) balanced metrics, whose important feature is the convexity along geodesics. We remarked at the end of Sections \ref{scitr} and \ref{scacbal} that the critical point of the (anticanonical) balancing energy exists if and only if its asymptotic slope is strictly positive along geodesics (see Figure \ref{figpic}). While this fact is surely well-known to the experts, it seems that its detailed proof is not readily available in the literature. In what follows, we provide a detailed proof which relies on the Hopf--Rinow theorem. In fact, Hopf--Rinow is the only significant ingredient in the proof, and it can be generalised to any length metric space for which a generalisation of the Hopf--Rinow theorem is known to hold. Thus the proof works not only for (finite dimensional) complete Riemannian manifolds such as $Y_k = GL(N_k , \cx)) / U(N_k)$, but also for locally compact complete length spaces (see Remark \ref{rmcrmclsp}).

\begin{definition}
	A Riemannian manifold $(Y,\textsl{g}_Y)$ is said to be \textbf{complete} if every geodesic of $(Y,\textsl{g}_Y)$ can be extended indefinitely.
\end{definition}

In what follows, $d_Y$ stands for the distance function defined by the Riemannian metric $\textsl{g}_Y$. A foundational theorem in Riemannian geometry is the Hopf--Rinow theorem stated as follows (see e.g.~\cite[Chapter IV, Section 4]{KN1}).

\begin{theorem} \emph{(Hopf--Rinow)} \label{thmhopfrinow}
	Let $(Y,\textsl{g}_Y)$ be a connected Riemannian manifold. The following are equivalent.
	\begin{enumerate}
	\item $(Y,\textsl{g}_Y)$ is a complete Riemannian manifold.
	\item There exists a point $y_0 \in Y$ such that any geodesic segment $\gamma (t)$ emanating from $y_0$ can be extended indefinitely, i.e.~can be extended to a geodesic ray $\gamma : [0, + \infty) \to Y$, $\gamma (0) = y_0$;
	\item $(Y, d_Y)$ is a complete metric space.
	\item A subset of $Y$ is compact if and only if it is closed and bounded with respect to $d_Y$.
\end{enumerate}
Moreover, any of the above conditions implies that any two points in $Y$ can be joined by a minimising geodesic.
\end{theorem}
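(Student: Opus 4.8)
The plan is to prove the four equivalences through the cycle $(1)\Rightarrow(2)\Rightarrow(4)\Rightarrow(3)\Rightarrow(1)$, after isolating one geometric lemma from which both the cycle and the final ``moreover'' clause follow. The implication $(1)\Rightarrow(2)$ is immediate, and $(4)\Rightarrow(3)$ is a soft point-set argument: a $d_Y$-Cauchy sequence is bounded, hence lies in a compact set by $(4)$, hence has a convergent subsequence and therefore converges. So the real content sits in $(2)\Rightarrow(4)$ and $(3)\Rightarrow(1)$, and I would route both through the lemma below.

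The key lemma I would prove is: \emph{if every geodesic emanating from a fixed $y_0\in Y$ extends to all of $[0,\infty)$, then any $p\in Y$ can be joined to $y_0$ by a minimising geodesic.} To prove it, set $r:=d_Y(y_0,p)$ and pick $\delta>0$ below the injectivity radius at $y_0$, so that the geodesic sphere $S_\delta:=\exp_{y_0}(\{v\in T_{y_0}Y:|v|=\delta\})$ is a compact embedded hypersurface; let $q_0=\exp_{y_0}(\delta v_0)$, with $|v_0|=1$, minimise the continuous function $q\mapsto d_Y(q,p)$ on $S_\delta$. Let $\gamma(t):=\exp_{y_0}(tv_0)$, defined for all $t\ge 0$ by hypothesis, and study $A:=\{t\in[0,r]:d_Y(\gamma(t),p)=r-t\}$. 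I would check that $A$ is: (i) nonempty — it contains $0$, and it contains $\delta$ because every path from $y_0$ to $p$ meets $S_\delta$, giving $r\ge\delta+d_Y(q_0,p)$, while the triangle inequality gives $d_Y(q_0,p)\ge r-\delta$; (ii) closed, by continuity; and (iii) ``open to the right'': if $t_0\in A$ and $t_0<r$, running the first-crossing argument around $\gamma(t_0)$ for a small sphere of radius $\delta'$ produces a broken path from $y_0$ of length $t_0+\delta'$ realising the distance to the new minimiser, hence an unbroken smooth geodesic, which by uniqueness of geodesics with given initial data must coincide with $\gamma$, so $t_0+\delta'\in A$. Then $\sup A=r\in A$, i.e.\ $\gamma(r)=p$ and $\gamma|_{[0,r]}$ has length $r=d_Y(y_0,p)$, so it is minimising.

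Granting the lemma, $(2)\Rightarrow(4)$ follows since each closed ball $\bar B_{d_Y}(y_0,\rho)$ equals $\exp_{y_0}(\{v:|v|\le\rho\})$, the continuous image of a Euclidean ball, hence is compact; any closed bounded set is a closed subset of such a ball and therefore compact, and the converse (compact $\Rightarrow$ closed and bounded) is standard. For $(3)\Rightarrow(1)$: if a unit-speed geodesic $\gamma:[0,b)\to Y$ failed to extend past $b$, then $d_Y(\gamma(s),\gamma(t))\le|s-t|$ makes $\gamma(t)$ Cauchy as $t\uparrow b$, so $\gamma(t)\to p$ for some $p$; invoking the uniform existence of normal neighbourhoods near $p$ (a neighbourhood $U\ni p$ and $\varepsilon>0$ with $\exp_q$ a diffeomorphism on the $\varepsilon$-ball for every $q\in U$), the geodesic restarted at $\gamma(t)$ for $t$ close to $b$ extends for time $\varepsilon>b-t$, contradicting maximality. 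Finally, under any of $(1)$--$(4)$ we have $(1)$, so the lemma applies with $y_0$ arbitrary and produces a minimising geodesic between any prescribed pair of points.

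The main obstacle is step (iii) of the lemma: I need to know that a piecewise-$C^1$ curve realising the distance between its endpoints is automatically an unbroken smooth geodesic — so that uniqueness of geodesics identifies the extended path with $\gamma$ — and I need the local uniformity of the exponential map near $p$. Both I would obtain by the usual compactness argument on the (unit) tangent bundle together with the first-variation formula; everything else reduces to bookkeeping with the triangle inequality and the elementary estimate that the length of a piecewise-$C^1$ path bounds the distance between its endpoints from above.
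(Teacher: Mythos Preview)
The paper does not give its own proof of this theorem: it is quoted as a classical result with a reference to \cite[Chapter IV, Section 4]{KN1}, and is then used as a black box (chiefly through item (4), the Heine--Borel property) in the arguments of Section~\ref{scgcfcrm}. So there is no ``paper's proof'' to compare against.

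That said, your proposal is a correct and complete outline of the standard proof, essentially the one found in Kobayashi--Nomizu or do Carmo. The connectivity lemma (every point can be joined to $y_0$ by a minimising geodesic once $\exp_{y_0}$ is defined on all of $T_{y_0}Y$) is indeed the heart of the matter, and your continuity argument on the set $A=\{t\in[0,r]:d_Y(\gamma(t),p)=r-t\}$ is the usual one. One small phrasing issue: calling $A$ ``open to the right'' is slightly misleading, since the $\delta'$ you gain at $t_0$ may depend on $t_0$; the clean way to conclude is to set $t_0:=\sup A$, note $t_0\in A$ by closedness, and then your step (iii) applied at $t_0$ forces $t_0=r$. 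You have also correctly identified the two genuine analytic inputs beyond bookkeeping: that a length-minimising broken geodesic has no corner (first variation), and the local uniformity of the injectivity radius near a given point (smooth dependence of ODE solutions on initial data). With those in hand your cycle $(1)\Rightarrow(2)\Rightarrow(4)\Rightarrow(3)\Rightarrow(1)$ and the final clause go through as written.
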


\begin{remark} \label{rmcrmclsp}
	The Hopf--Rinow theorem above, especially the fourth item (the Heine--Borel property), is the \textit{only} property of a complete Riemannian manifold that we will use in this section. In other words, we shall provide a proof of the main results (Theorems \ref{thcxffmp} and \ref{thcxffmpg}) in such a way that it generalises to any length metric space for which the analogue of Hopf--Rinow holds: for example, it is well-known that Hopf--Rinow can be generalised to any locally compact length space, which is called the Hopf--Rinow--Cohn--Vossen Theorem in \cite[Theorem 2.5.28]{BBI}.
\end{remark}

Note the following easy consequence of the theorem above.

\begin{lemma} \label{lmprestift}
	A continuous function $f : Y \to \rl$ is proper, i.e.~$f^{-1} (I)$ is compact in $(Y, d_Y)$ for any closed interval $I \subset \rl$, if and only if the sequence $\{ f (y_i ) \}_i \subset \rl$ is unbounded for any $d_Y$-unbounded sequence $\{ y_i \}_i$.
\end{lemma}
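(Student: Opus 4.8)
The plan is to prove the contrapositive in both directions, since both involve a clean dichotomy. First I would record the easy direction: if $f$ is proper, then for any $d_Y$-unbounded sequence $\{y_i\}$ the values $\{f(y_i)\}$ must be unbounded, for otherwise $\{f(y_i)\}$ would lie in some closed bounded interval $I$, hence $\{y_i\} \subset f^{-1}(I)$, which is compact and therefore $d_Y$-bounded (a compact subset of a metric space has finite diameter), contradicting unboundedness of $\{y_i\}$.

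For the converse, suppose the sequence condition holds; I want to show $f^{-1}(I)$ is compact for every closed interval $I = [a,b]$. Since $f$ is continuous, $f^{-1}(I)$ is closed. By the Hopf--Rinow theorem (Theorem \ref{thmhopfrinow}, item (4), the Heine--Borel property), it then suffices to show $f^{-1}(I)$ is $d_Y$-bounded. If it were not, I could pick a sequence $\{y_i\} \subset f^{-1}(I)$ that is $d_Y$-unbounded; by hypothesis $\{f(y_i)\}$ is then unbounded in $\rl$, but $\{f(y_i)\} \subset I = [a,b]$ is bounded, a contradiction. Hence $f^{-1}(I)$ is closed and bounded, so compact by Hopf--Rinow.

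I do not expect a genuine obstacle here; the only subtlety worth stating explicitly is the direction ``compact $\Rightarrow$ $d_Y$-bounded,'' which is a general metric-space fact (cover by balls of radius $1$, extract a finite subcover, use the triangle inequality) and does not require the Riemannian structure. This is also precisely the point at which the argument generalises verbatim to any locally compact complete length space via the Hopf--Rinow--Cohn--Vossen theorem, as mentioned in Remark \ref{rmcrmclsp}: the only property of $Y$ used is Heine--Borel, i.e.\ the equivalence of compactness with closedness plus boundedness. I would therefore phrase the proof so as to isolate that single use, keeping everything else purely topological.
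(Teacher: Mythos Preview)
Your proposal is correct and essentially identical to the paper's proof: both argue the forward direction by contrapositive (a bounded $\{f(y_i)\}$ lands in a compact $f^{-1}(I)$, forcing $\{y_i\}$ bounded), and both reduce the converse to the Heine--Borel property from Hopf--Rinow. The only cosmetic difference is that the paper verifies sequential compactness of $f^{-1}(I)$ directly (any sequence in it is $d_Y$-bounded, hence has a convergent subsequence in the closed set $f^{-1}(I)$), whereas you show closed $+$ bounded and then invoke item (4) of Theorem~\ref{thmhopfrinow}; these are equivalent uses of the same theorem.
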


\begin{proof}
	If $f$ is proper and $\{ f(y_i ) \}_i$ is a bounded sequence, we find a closed interval $I$ that contains $\{ f(y_i ) \}_i$, which implies that the sequence $\{ y_i \}_i$ is contained in the compact set $f^{-1} (I)$ and hence must be $d_Y$-bounded.
	
	Suppose conversely that $\{ f (y_i ) \}_i \subset \rl$ being bounded implies that $\{ y_i \}_i$ is $d_Y$-bounded. For a closed interval $I \subset \rl$ pick any sequence $\{ y_i \}_i \subset f^{-1} (I)$, which must be $d_Y$-bounded by hypothesis. On the other hand, since $f^{-1} (I)$ is closed, Theorem \ref{thmhopfrinow} implies that there exists a subsequence of $\{ y_i \}_i$ that converges to a point in $f^{-1} (I)$, proving its compactness.
\end{proof}

For any $y_0 \in Y$ and $\tau > 0$, we define a subset of $Y$ defined as
\begin{equation*}
	S(y_0, \tau ) := \{ y \in Y \mid d_Y (y,y_0) = \tau \} = \{ \gamma (t) \mid \text{geodesic } \gamma \text{ with } \gamma (0) = y_0 \}
\end{equation*}
which is compact by Theorem \ref{thmhopfrinow}.

We first define a geodesically convex function, which is nothing but a function that is convex along geodesic segments.
\begin{definition}
A real-valued function $f$ on a Riemannian manifold $(Y,\textsl{g}_Y)$	is said to be \textbf{convex along geodesics} if for any geodesic segment $\{ \gamma (\tau) \}_{\tau_0 \le \tau \le \tau_1}$ we have
\begin{equation*}
		f(\gamma (\tau ) ) \le \frac{\tau - \tau_0}{\tau_1 - \tau_0}  f(\gamma (\tau_1)) + \frac{\tau_1-\tau}{\tau_1 - \tau_0} f ( \gamma (\tau_0) ).
\end{equation*}
$f$ is said to be \textbf{strictly convex along geodesics} if a strict inequality holds in the above.
\end{definition}

We first recall some elementary facts on convex functions, which can be found e.g.~in \cite{Horm}. First, if $f$ is a convex function defined on a closed interval $[s_1, s_2]$, the left derivative $f'_l (\tau)$ and the right derivative $f'_r (\tau)$ are both well-defined for $\tau \in (s_1, s_2)$. These are in fact monotonically increasing in $\tau$ since for all $\tau_1, \tau_2 \in (s_1 , s_2)$, $t_1 < t_2$, we have
\begin{equation} \label{cvxlrdv}
	f'_l (\tau_1) \le f'_r (\tau_1) \le \frac{f(\tau_2) - f(\tau_1)}{\tau_2 - \tau_1} \le f'_l (\tau_2) \le f'_r (\tau_2)
\end{equation}
by \cite[Corollary 1.1.6]{Horm}. If $f$ is continuous on $[s_1 , s_2]$ and $C^2$ in the interior, the convexity is equivalent to $f''(\tau) \ge 0$ for $\tau \in (s_1 , s_2)$ \cite[Corollary 1.1.10]{Horm}. The following lemma is also well-known; see e.g.~\cite[Theorem 1.1.5]{Horm}.
\begin{lemma} \label{lmmtdqhm}
	Suppose that $f : \rl \to \rl$ is convex. Then, for any $x \in \rl$, $(f(x+\tau) - f(x)) / \tau$ is increasing in $\tau$.
\end{lemma}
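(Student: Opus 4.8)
The plan is to deduce the statement from the elementary ``three-chords'' inequality for convex functions: for any $a<b<c$ in $\rl$,
\begin{equation*}
	\frac{f(b)-f(a)}{b-a} \le \frac{f(c)-f(a)}{c-a} \le \frac{f(c)-f(b)}{c-b}.
\end{equation*}
This is essentially the content of \eqref{cvxlrdv}, so one may simply invoke \cite[Corollary 1.1.6]{Horm}; alternatively it is immediate from the definition of convexity. Writing $b = \lambda a + (1-\lambda) c$ with $\lambda := (c-b)/(c-a) \in (0,1)$ gives $f(b) \le \lambda f(a) + (1-\lambda) f(c)$; subtracting $f(a)$ and dividing by $b-a = (1-\lambda)(c-a) > 0$ yields the first inequality, while subtracting $f(c)$ and dividing by $c-b = \lambda(c-a) > 0$ yields the second.

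Granting this, fix $x \in \rl$ and let $\tau_1 < \tau_2$ be nonzero; the goal is $(f(x+\tau_1)-f(x))/\tau_1 \le (f(x+\tau_2)-f(x))/\tau_2$. I would split into three cases according to the signs of $\tau_1,\tau_2$. If $0<\tau_1<\tau_2$, apply the three-chords inequality to $(a,b,c)=(x,\,x+\tau_1,\,x+\tau_2)$; its first inequality is exactly the goal. If $\tau_1<0<\tau_2$, apply it to $(a,b,c)=(x+\tau_1,\,x,\,x+\tau_2)$; its outer inequality reads $\frac{f(x)-f(x+\tau_1)}{-\tau_1} \le \frac{f(x+\tau_2)-f(x)}{\tau_2}$, and multiplying numerator and denominator of the left-hand side by $-1$ (which does not change its value) turns it into the goal. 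If $\tau_1<\tau_2<0$, apply it to $(a,b,c)=(x+\tau_1,\,x+\tau_2,\,x)$; its last inequality reads $\frac{f(x)-f(x+\tau_1)}{-\tau_1} \le \frac{f(x)-f(x+\tau_2)}{-\tau_2}$, and rewriting each side in the same way gives the goal. If only monotonicity for $\tau>0$ is needed, the first case alone suffices.

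There is no genuine obstacle here, the statement being entirely elementary; the only point requiring a little care is the sign bookkeeping in the cases where $\tau_1$ or $\tau_2$ is negative, where one must recognise $(f(x+\tau)-f(x))/\tau$ as the slope of the chord of the graph of $f$ between the correctly ordered abscissae $\min\{x,x+\tau\}$ and $\max\{x,x+\tau\}$ before applying the three-chords inequality. From this viewpoint the asserted monotonicity of $\tau \mapsto (f(x+\tau)-f(x))/\tau$ is nothing but a restatement of \eqref{cvxlrdv}.
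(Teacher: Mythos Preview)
Your proof is correct; the three-chords inequality is exactly the right tool, and your case analysis handles all signs of $\tau$ cleanly. The paper itself does not supply a proof of this lemma but simply refers to \cite[Theorem 1.1.5]{Horm}, whose content is precisely the slope-monotonicity argument you have written out, so your approach is the same one the paper is implicitly invoking.
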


We observe the following elementary consequence of the above results.
\begin{lemma} \label{lmasift}
	Suppose that $f : \rl \to \rl$ is convex. Then
	\begin{equation*}
		\lim_{\tau \to + \infty} \frac{f(\tau)}{\tau} = \lim_{\tau \to + \infty} f'_l (\tau) = \lim_{\tau \to + \infty} f'_r (\tau),
	\end{equation*}
	where we allow the limit to be $+ \infty$. The same result holds for the limit $\tau \to - \infty$, by allowing it to be $- \infty$.
\end{lemma}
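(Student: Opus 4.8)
The plan is to establish all three limits in $(-\infty,+\infty]$ separately from the monotonicity facts already recorded, and then to squeeze the secant slopes of $f$ between its one-sided derivatives to force the three limits to coincide.

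First, by \eqref{cvxlrdv} the maps $\tau\mapsto f'_l(\tau)$ and $\tau\mapsto f'_r(\tau)$ are non-decreasing, so $\lim_{\tau\to+\infty}f'_l(\tau)$ and $\lim_{\tau\to+\infty}f'_r(\tau)$ exist in $(-\infty,+\infty]$; since \eqref{cvxlrdv} also gives $f'_l(\tau)\le f'_r(\tau)\le f'_l(\tau')$ whenever $\tau<\tau'$, letting $\tau,\tau'\to+\infty$ shows the two limits agree, and I call the common value $m$. Likewise, by Lemma \ref{lmmtdqhm} applied at $x=0$ the quotient $(f(\tau)-f(0))/\tau$ is non-decreasing in $\tau$, hence has a limit in $(-\infty,+\infty]$; since $f(0)/\tau\to 0$, the limit $\ell:=\lim_{\tau\to+\infty}f(\tau)/\tau$ exists in $(-\infty,+\infty]$ and equals it.

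It then remains to prove $\ell=m$. Fix $s>0$. For every $\tau>s$, inequality \eqref{cvxlrdv} with $\tau_1=s$, $\tau_2=\tau$ yields
\[
 f'_r(s)\ \le\ \frac{f(\tau)-f(s)}{\tau-s}\ \le\ f'_l(\tau)\ \le\ m,
\]
the last step because $f'_l$ is non-decreasing with supremum $m$. Writing the middle term as $\bigl(f(\tau)/\tau-f(s)/\tau\bigr)/(1-s/\tau)$ shows it tends to $\ell$ as $\tau\to+\infty$. Hence $f'_r(s)\le\ell\le m$ for all $s>0$; letting $s\to+\infty$ in the left-hand inequality gives $m\le\ell$, so $\ell=m$, which is the assertion for $\tau\to+\infty$.

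For the limit $\tau\to-\infty$ I apply the case just proved to the convex function $g(\tau):=f(-\tau)$, using the identities $g'_r(\tau)=-f'_l(-\tau)$, $g'_l(\tau)=-f'_r(-\tau)$ and $g(\tau)/\tau=-f(-\tau)/(-\tau)$, which translate the three $\tau\to+\infty$ limits for $g$ into the negatives of the three $\tau\to-\infty$ limits for $f$; in particular a value $+\infty$ for $g$ becomes $-\infty$ for $f$, as claimed. The only point requiring care — and it is not really an obstacle — is the consistent treatment of the value $+\infty$: one must check that the convergence of $(f(\tau)-f(s))/(\tau-s)$ to $\ell$ and the squeeze above remain correct when $\ell=+\infty$, which they do because every quantity involved is eventually monotone in $\tau$ with positive denominator.
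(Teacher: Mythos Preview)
Your proof is correct. The paper does not actually supply a proof of this lemma, presenting it only as ``an elementary consequence of the above results'' (namely \eqref{cvxlrdv} and Lemma~\ref{lmmtdqhm}); your argument uses precisely those two ingredients to fill in the omitted details, so it is exactly the kind of verification the paper has in mind.
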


All the results above carry over to geodesically convex functions in an obvious manner. Recall also the following well-known result.

\begin{lemma} \label{lmecvcpgmin}
	Suppose that $f : Y \to \rl$ is a $C^1$ function that is convex along geodesics. Then any critical point of $f$ necessarily attains the global minimum of $f$ over $Y$.
\end{lemma}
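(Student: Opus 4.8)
The plan is to reduce the statement to the elementary one-variable calculus of convex functions recalled above, applied to the restriction of $f$ to a geodesic. Let $p \in Y$ be a critical point of $f$, so that $df_p = 0$, and let $q \in Y$ be arbitrary; it suffices to show $f(q) \ge f(p)$. Since $(Y,\textsl{g}_Y)$ is complete and connected, the final assertion of the Hopf--Rinow theorem (Theorem \ref{thmhopfrinow}) provides a minimising geodesic from $p$ to $q$; being a geodesic segment emanating from $p$, it extends (again by completeness, item (2) of Theorem \ref{thmhopfrinow}) to a geodesic ray $\gamma : [0,+\infty) \to Y$ with $\gamma(0) = p$ and $\gamma(\ell) = q$ for some $\ell \ge 0$.

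Next I would consider $g := f \circ \gamma : [0,+\infty) \to \rl$. Because $f$ is convex along geodesics and $\gamma$ is a geodesic, the defining inequality of geodesic convexity, restricted to sub-segments of $\gamma$, says precisely that $g$ is a convex function of one real variable. Moreover $g$ is differentiable, being the composition of the $C^1$ function $f$ with the smooth curve $\gamma$, and by the chain rule $g'(0) = df_p(\dot\gamma(0)) = 0$ since $df_p = 0$.

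Finally I would invoke the monotonicity of difference quotients of a convex function (Lemma \ref{lmmtdqhm}, whose proof applies verbatim to a convex function on a half-line): the map $\tau \mapsto (g(\tau) - g(0))/\tau$ is non-decreasing on $(0,+\infty)$, and its limit as $\tau \to 0^+$ equals the right derivative $g'_r(0) = g'(0) = 0$. Hence $(g(\tau) - g(0))/\tau \ge 0$ for every $\tau > 0$; taking $\tau = \ell$ yields $f(q) = g(\ell) \ge g(0) = f(p)$. As $q \in Y$ was arbitrary, $p$ attains the global minimum of $f$ over $Y$.

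I do not expect a genuine obstacle here: the only substantive input is completeness of $(Y,\textsl{g}_Y)$, used exactly once — to join $p$ to $q$ by a geodesic — while the remainder is the one-dimensional convexity bookkeeping already set up in the excerpt. This is also the sole point where one needs the Riemannian structure rather than a length-space analogue of Hopf--Rinow, in the spirit of Remark \ref{rmcrmclsp}.
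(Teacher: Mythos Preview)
The paper does not actually prove Lemma~\ref{lmecvcpgmin}: it is introduced with ``Recall also the following well-known result'' and left without a proof environment. So there is no argument in the paper to compare yours against.

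Your proof is correct and is exactly the kind of argument the surrounding text invites: reduce to a one-variable convex function via a geodesic through the critical point, then use the monotonicity already recorded in (\ref{cvxlrdv}) or Lemma~\ref{lmmtdqhm}. Two small cosmetic remarks. First, you do not need to extend the segment to a full ray $\gamma:[0,+\infty)\to Y$; the geodesic segment joining $p$ to $q$ already suffices, and working on $[0,\ell]$ avoids invoking item (2) of Theorem~\ref{thmhopfrinow} a second time. Second, rather than passing through Lemma~\ref{lmmtdqhm} and a limit, you could cite (\ref{cvxlrdv}) directly: $0=g'(0)=g'_r(0)\le (g(\ell)-g(0))/\ell$, hence $f(q)\ge f(p)$, which is marginally cleaner. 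Neither point affects correctness.
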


We first prove the following technical lemma, which will be useful later.

\begin{lemma} \label{lmtccvsp}
	Let $(Y,\textsl{g}_Y)$ be a complete Riemannian manifold and $y_0 \in Y$ be a fixed point. Let $f : Y \to \rl$ be a continuous function that is convex along geodesic rays in $(Y,\textsl{g}_Y)$ emanating from $y_0$. Suppose that a sequence $\{ y_i \}_i \subset Y$ satisfies the following.
	\begin{enumerate}
		\item $\{ y_i \}_i \subset Y$ is $d_Y$-unbounded.
		\item There exists a geodesic $\gamma_*$ in $(Y, d_Y)$ such that the following hold:
		\begin{itemize}
		\item when we write $y_i = \gamma_i (\tau_i)$ by using a geodesic $\gamma_{i}$ and some $\tau_i >0$, we have $d_Y (\gamma_i (1) , \gamma_* (1)) \to 0$ as $i \to \infty$;
		\item $\displaystyle{\lim_{\tau \to + \infty} \frac{f(\gamma_{*} (\tau))}{\tau} > 0}$.
		\end{itemize}
	\end{enumerate}
	Then, we have  $\displaystyle{\liminf_{i \to \infty} \frac{f(y_i)}{\tau_i}} > 0$.
\end{lemma}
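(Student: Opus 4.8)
The plan is to reduce everything to the one-variable convexity of $f$ along the geodesic segments $\gamma_i|_{[0,\tau_i]}$ and along $\gamma_*$, using only the elementary facts about convex functions recalled in Lemmas \ref{lmmtdqhm} and \ref{lmasift}; the only genuinely geometric ingredient is that the rays $\gamma_i$ converge to $\gamma_*$ on every compact time interval, not merely at $t=1$.

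\emph{Step 1 (the limiting ray).} First I would take the $\gamma_i$ to be unit-speed geodesics emanating from $y_0$, and, replacing $\{y_i\}$ by a subsequence if necessary, reduce to the case $\tau_i\to\infty$ (indeed $\tau_i\ge d_Y(y_i,y_0)$, and a $d_Y$-unbounded sequence has a subsequence along which $d_Y(y_i,y_0)\to\infty$). Since $S(y_0,1)$ is compact by Theorem \ref{thmhopfrinow}, after a further subsequence the directions $\dot\gamma_i(0)$ converge; combined with $d_Y(\gamma_i(1),\gamma_*(1))\to0$ and the fact that a geodesic ray emanating from $y_0$ is determined by its value at $t=1$ (automatic on $Y_k=GL(N_k,\cx)/U(N_k)$, where $\exp_{y_0}$ is a diffeomorphism), this yields $\gamma_i\to\gamma_*$ uniformly on every compact subset of $[0,\infty)$. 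By continuity of $f$ this gives $f(\gamma_i(T))\to f(\gamma_*(T))$ for each fixed $T>0$.

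\emph{Step 2 (the convexity estimate and the conclusion).} Now fix $T>0$ and let $i$ be large enough that $\tau_i>T$. The function $t\mapsto f(\gamma_i(t))$ is convex on $[0,\tau_i]$, since $\gamma_i|_{[0,\tau_i]}$ lies on a geodesic ray emanating from $y_0$ (using Theorem \ref{thmhopfrinow} to extend the segment), and it equals $f(y_0)$ at $t=0$. Monotonicity of the difference quotients of a convex function based at $0$ (Lemma \ref{lmmtdqhm}) then gives
\begin{equation*}
	\frac{f(y_i)}{\tau_i}\ \ge\ \frac{f(\gamma_i(T))-f(y_0)}{T}+\frac{f(y_0)}{\tau_i}.
\end{equation*}
Letting $i\to\infty$, and using $\tau_i\to\infty$ together with $f(\gamma_i(T))\to f(\gamma_*(T))$, we obtain $\liminf_{i\to\infty}f(y_i)/\tau_i\ge\bigl(f(\gamma_*(T))-f(y_0)\bigr)/T$. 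The right-hand side is nondecreasing in $T$ by Lemma \ref{lmmtdqhm} applied to the convex function $t\mapsto f(\gamma_*(t))$, and by Lemma \ref{lmasift} its supremum over $T>0$ equals $\lim_{T\to\infty}f(\gamma_*(T))/T$, which is strictly positive by hypothesis~(2). Taking the supremum over $T$ therefore yields $\liminf_{i\to\infty}f(y_i)/\tau_i\ge\lim_{T\to\infty}f(\gamma_*(T))/T>0$, as required.

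\emph{Main obstacle.} The only step that is not routine convexity bookkeeping is Step 1: upgrading the hypothesis $d_Y(\gamma_i(1),\gamma_*(1))\to0$ to convergence $\gamma_i\to\gamma_*$ on all compact time intervals, which is exactly what transports the positivity of the asymptotic slope of $f$ from $\gamma_*$ to the sequence $\{y_i\}$ (a bound using only the value at $t=1$ would give merely $\liminf f(y_i)/\tau_i\ge f(\gamma_*(1))-f(y_0)$, which need not be positive). This is precisely where the compactness of $S(y_0,1)$ (Theorem \ref{thmhopfrinow}) and the extendability of geodesic segments enter, and the argument goes through verbatim in any locally compact complete length space in which geodesics emanating from $y_0$ are determined by a value at a later time.
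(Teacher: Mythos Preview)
Your argument is essentially the paper's: both upgrade the convergence $\gamma_i(1)\to\gamma_*(1)$ to convergence at a larger fixed time (the paper's $\tau_1$, your $T$) and then apply the monotonicity of difference quotients from Lemma~\ref{lmmtdqhm}; the paper packages this as a proof by contradiction with a specific choice of $\tau_1$, while you argue directly by taking the supremum over $T$, which is slightly cleaner. You are also right to isolate the geometric input in Step~1---the paper's iteration step (propagating closeness from time $1$ to $1+\epsilon$ and so on up to $\tau_1$) implicitly relies on the same ``geodesics from $y_0$ are determined by a later value'' hypothesis that you make explicit.
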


\begin{proof}
	Set $\lim_{\tau \to + \infty} f(\gamma_{*} (\tau)) /\tau =:c >0$, which we allow to be $+ \infty$. This implies that, if $c$ is finite, for any $\epsilon >0$ there exists $\tau_0 ( \epsilon ) > 1$ such that for all $\tau \ge \tau_0 (\epsilon )$ we have $f (\gamma_{*} (\tau)) \ge \tau (c - \epsilon )$. The argument below carries over word by word to the case $c = + \infty$, by simply replacing $c - \epsilon$ by an arbitrarily large real number.
	
	Suppose for contradiction that $\liminf_{i \to \infty} f(y_i)/\tau_i =: c' \le 0$. We now take $\epsilon>0$ to be sufficiently small and $\tau_1 := \tau_1 (c, c' , \epsilon ) > \tau_0 ( \epsilon )$ to be large enough so that
	\begin{equation*}
		(c - \epsilon ) - \frac{f(y_0) + 1}{\tau_1} > c' + \frac{c}{2},
	\end{equation*}
	which is well-defined since $c' \le 0$. On the other hand, by the continuity of $f$, there exists $\delta = \delta (\tau_1 (c, c', \epsilon )) >0$ that depends on $\tau_1 (c, c', \epsilon )$ such that
	\begin{equation*}
		d_Y (\gamma_i (1) , \gamma_* (1)) < \delta \Rightarrow f(\gamma_{i} (\tau_1)) > f(\gamma_{*} (\tau_1)) -1.	
	\end{equation*}
	To prove the above assertion, observe that for any $\epsilon' >0$ there exist $\epsilon , \delta >0$ such that $d_Y (\gamma_i (1) , \gamma_* (1) ) < \delta \Rightarrow d_Y (\gamma_i (1 + \epsilon ) , \gamma_* (1 + \epsilon ) ) < \epsilon'$, and iterate using the compactness of $[1,\tau_1]$.
	
	Note further that by convexity (see Lemma \ref{lmmtdqhm}) we have
	\begin{equation*}
		\frac{f(\gamma_{i} (\tau)) - f(y_0)}{\tau} \ge \frac{f(\gamma_{i} (\tau_1)) - f(y_0)}{\tau_1}
	\end{equation*}
	for all $\tau > \tau_1$. We take $i$ to be large enough so that $\tau_i > \tau_1 (c, c' , \epsilon )$ holds and $d_Y (\gamma_i (1) , \gamma_* (1) ) < \delta$, and then we find
	\begin{align*}
		\frac{f(\gamma_{i} (\tau_i)) - f(y_0)}{\tau_i} &\ge \frac{f(\gamma_{i} (\tau_1)) - f(y_0)}{\tau_1} \\
		&> \frac{f(\gamma_{*} (\tau_1)) - f(y_0) -1}{\tau_1} \\
		&\ge \frac{\tau_1( c - \epsilon ) - f(y_0) -1}{\tau_1} \\
		&> c'+c/2.
	\end{align*}
	Thus, for all large enough $i$, we get
	\begin{equation*}
		\frac{f(y_i)}{\tau_i} > c' + \frac{c}{2} + \frac{f (y_0)}{\tau_i} > c' + \frac{c}{4},
	\end{equation*}
	which contradicts $\liminf_{i \to \infty} f(y_i)/\tau_i =: c'$.
\end{proof}

We now give the proof of the main result of this section, following the approach in \cite[Theorem 1.6]{Bou18icm}.

\begin{theorem} \label{thcxffmp}
	Let $(Y,\textsl{g}_Y)$ be a complete Riemannian manifold and $y_0 \in Y$ be a fixed point. Let $f : Y \to \rl$ be a continuous function that is strictly convex along geodesic rays in $(Y,\textsl{g}_Y)$. Then the following are equivalent.
	\begin{enumerate}
		\item $f$ admits a unique global minimum over $Y$.
		\item $f : Y \to \rl$ is proper and bounded from below.
		\item There exists $C, D >0$ such that $f (y) \ge C d_Y (y, y_0) -D$ for all $y \in Y$.
		\item $\displaystyle{\lim_{\tau \to + \infty} \frac{f(\gamma(\tau))}{\tau} >0}$ for any geodesic $\gamma$ emanating from $y_0$.
		\item There exists $C >0$ such that $\displaystyle{\lim_{\tau \to + \infty} \frac{f(\gamma(\tau))}{\tau} > C}$ for any geodesic $\gamma$ emanating from $y_0$.
	\end{enumerate}
\end{theorem}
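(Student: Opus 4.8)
The plan is to prove the five conditions equivalent via a loop of implications, with compactness (Hopf--Rinow, Theorem~\ref{thmhopfrinow}) doing the work and Lemma~\ref{lmtccvsp} as the principal tool. First note that along any unit-speed geodesic ray $\gamma$ emanating from $y_0$ the function $\tau\mapsto f(\gamma(\tau))$ is continuous and convex, so by Lemmas~\ref{lmmtdqhm} and~\ref{lmasift} the asymptotic slope $\sigma(\gamma):=\lim_{\tau\to+\infty}f(\gamma(\tau))/\tau=\sup_{\tau>0}(f(\gamma(\tau))-f(y_0))/\tau$ is well-defined in $(-\infty,+\infty]$; in particular (5)$\Rightarrow$(4) is trivial. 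I would begin with the ``spine'' (1)$\Leftrightarrow$(2)$\Leftrightarrow$(3). Here (3)$\Rightarrow$(2) is immediate: the estimate gives $f\ge-D$ and forces $f(y_i)\to+\infty$ along every $d_Y$-unbounded sequence, so $f$ is proper by Lemma~\ref{lmprestift}. For (2)$\Rightarrow$(1): properness makes $f^{-1}([\inf f,\inf f+1])$ compact and nonempty, so $f$ attains its infimum, while strict convexity applied to the minimising geodesic (Hopf--Rinow) joining two putative minima gives uniqueness. For (1)$\Rightarrow$(3): let $y_*$ be the unique minimiser, $m:=f(y_*)$, and set $g_1:=\min_{S(y_*,1)}f$, attained since $S(y_*,1)$ is compact; since $m$ is the \emph{unique} minimum we have $g_1>m$, and for $y$ with $r:=d_Y(y,y_*)\ge1$, convexity and monotonicity of difference quotients (Lemma~\ref{lmmtdqhm}) along the minimising geodesic from $y_*$ to $y$ give $f(y)-m\ge r(g_1-m)$, which after passing to the base point $y_0$ and enlarging $D$ yields (3).

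Next comes (4)$\Rightarrow$(3), where Lemma~\ref{lmtccvsp} enters. If (3) fails then $\liminf_{r\to+\infty}\frac{1}{r}\min_{S(y_0,r)}f\le0$, so there are points $y_i$ with $\tau_i:=d_Y(y_i,y_0)\to+\infty$ and $f(y_i)/\tau_i\to c'\le0$. Writing $y_i=\gamma_i(\tau_i)$ for a minimising unit-speed geodesic $\gamma_i$ from $y_0$, the points $\gamma_i(1)$ lie in the compact sphere $S(y_0,1)$, so after passing to a subsequence $\gamma_i(1)\to p$; let $\gamma_*$ be a geodesic ray extending the minimising geodesic from $y_0$ to $p$ (Hopf--Rinow). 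Then $d_Y(\gamma_i(1),\gamma_*(1))\to0$ and $\sigma(\gamma_*)>0$ by (4), so Lemma~\ref{lmtccvsp} gives $\liminf_i f(y_i)/\tau_i>0$, contradicting $c'\le0$; hence (3) holds.

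For (4)$\Rightarrow$(5) I would argue by lower semicontinuity: the unit tangent sphere $U_{y_0}Y\subset T_{y_0}Y$ is compact (this is where finite-dimensionality, more generally local compactness, is used), for each fixed $\tau>0$ the map $v\mapsto(f(\exp_{y_0}(\tau v))-f(y_0))/\tau$ is continuous on $U_{y_0}Y$, so $v\mapsto\sigma(\gamma_v)$ is a supremum of continuous functions, hence lower semicontinuous, hence attains its infimum on the compact $U_{y_0}Y$; by (4) that infimum is positive, and half of it is the constant in (5). It remains to prove (3)$\Rightarrow$(4), which closes the loop. By (3)$\Rightarrow$(2), $f$ is proper; fix a unit-speed geodesic ray $\gamma$ from $y_0$. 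If $f\circ\gamma$ were bounded, say with values in $[a,b]$, then $\gamma([0,+\infty))\subseteq f^{-1}([a,b])$, which is compact by properness; but a precompact geodesic ray leads to a contradiction (see below), so $f\circ\gamma$ is unbounded above, hence (being convex) tends to $+\infty$, and a convex function on $[0,+\infty)$ tending to $+\infty$ has positive asymptotic slope (its right derivative must eventually be positive, or else it would be non-increasing and bounded); thus $\sigma(\gamma)>0$, which is (4).

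The contradiction for a precompact geodesic ray is the only genuinely non-formal ingredient, and I expect it to be the crux. If $\gamma([0,+\infty))$ is precompact, then $f\circ\gamma$ is continuous on a compact set, hence bounded, hence (being strictly convex) strictly decreasing to some $\ell\in\rl$. Choosing $\tau_j\to+\infty$ with $\gamma'(\tau_j)$ converging (by compactness of the unit tangent bundle over $\overline{\gamma([0,+\infty))}$) to a vector $w$, completeness of the geodesic flow gives $\gamma(\tau_j+s)\to\tilde\gamma(s)$ for every $s\in\rl$, where $\tilde\gamma$ is the complete geodesic with $\tilde\gamma'(0)=w$; then $f(\tilde\gamma(s))=\lim_j f(\gamma(\tau_j+s))=\ell$ for all $s$, so $f$ is constant along $\tilde\gamma$, contradicting strict convexity of $f$ along $\tilde\gamma$. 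This is the single place where more than the Heine--Borel property and elementary convexity is used; replacing the geodesic flow by an Arzel\`a--Ascoli argument for uniform limits of geodesics makes the same reasoning valid for locally compact complete length spaces, as in Remark~\ref{rmcrmclsp}.
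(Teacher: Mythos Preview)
Your proof is correct, but the route differs from the paper's in several places. The paper runs the cycle $3\Rightarrow2\Rightarrow1\Rightarrow4\Rightarrow3$ together with a one-line estimate for $3\Rightarrow5$; in particular its $1\Rightarrow4$ is the hard step and is carried out \emph{via} Lemma~\ref{lmtccvsp}, by first treating geodesics emanating from the minimiser $y_*$ and then showing (by contradiction, again invoking Lemma~\ref{lmtccvsp}) that $f\circ\gamma$ attains its infimum along any other ray. You instead prove $1\Rightarrow3$ directly from the compactness of $S(y_*,1)$ and monotonicity of difference quotients, which is shorter and avoids Lemma~\ref{lmtccvsp} at that stage; your use of Lemma~\ref{lmtccvsp} is confined to $4\Rightarrow3$, where it matches the paper's argument essentially verbatim. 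The trade-off is that your $3\Rightarrow4$ (via the precompact-ray/limit-geodesic argument) and your $4\Rightarrow5$ (lower semicontinuity of the slope on the unit tangent sphere $U_{y_0}Y$) both invoke the exponential map and the geodesic flow, i.e.\ genuinely differential-geometric structure beyond the Heine--Borel property; the paper, by contrast, deliberately keeps the whole argument at the metric level so that it passes verbatim to locally compact complete length spaces (Remark~\ref{rmcrmclsp}). Your acknowledgement of this and your suggested Arzel\`a--Ascoli substitute are appropriate, though you would need a parallel substitute for the $U_{y_0}Y$ argument in $4\Rightarrow5$ (e.g.\ parametrising rays by $S(y_0,1)$ instead). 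Conversely, your $3\Rightarrow4$ argument is more robust than the paper's $3\Rightarrow5$ estimate when geodesic rays from $y_0$ are not globally minimising, since that estimate tacitly uses $\tau=d_Y(\gamma(\tau),\gamma(0))$; this is automatic in the intended application to $Y_k=GL(N_k,\cx)/U(N_k)$ but not on a general complete Riemannian manifold.
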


\begin{proof}
	It suffices to prove $3 \Rightarrow 2 \Rightarrow 1 \Rightarrow 4 \Rightarrow 3$, since $5 \Rightarrow 4$ is obvious and $3 \Rightarrow 5$ follows from
	\begin{equation*}
		\frac{f (\gamma (\tau))}{\tau} \ge C \frac{d_Y (\gamma (\tau) , y_0) }{d_Y (\gamma (\tau) , \gamma (0))} - \frac{D}{\tau} \ge C \frac{d_Y (\gamma (\tau) , \gamma (0) ) - d_Y (y_0 , \gamma (0) ) }{d_Y (\gamma (\tau) , \gamma (0))} - \frac{D}{\tau} ,
	\end{equation*}
	and taking the limit of $\tau = d_Y (\gamma (\tau) , \gamma (0) ) \to + \infty$.

	$3 \Rightarrow 2$ is an obvious consequence of Lemma \ref{lmprestift}. To prove $2 \Rightarrow 1$, take a sequence $\{ y_i \}_i \subset Y$ such that $f (y_i) \to \inf_Y f =:c > - \infty$ as $i \to \infty$. Since $f^{-1} ([c, c+1])$ is compact, this contains a convergent subsequence, which must attain the global minimum of $f$. We prove the uniqueness: suppose that there exist $y_1 , y_2 \in Y$, $y_1 \neq y_2$, that satisfy $f(y_1) = f(y_2) = \inf_Y f$. We pick a geodesic segment $\{ \gamma (\tau) \}_{0 \le \tau \le 1}$ connecting $y_1 = \gamma (0)$ and $y_2 = \gamma (1)$. On the other hand, strict convexity means
	\begin{equation*}
		f(\gamma (1/2) ) < \frac{1}{2} f(\gamma(0)) + \frac{1}{2} f(\gamma (1)) = \inf_Y f,
	\end{equation*}
	which is a contradiction, and hence we get $y_1 =y_2$.

We prove $1 \Rightarrow 4$. Since $f$ is strictly increasing along geodesics in a neighbourhood of the unique global minimum $y_* \in Y$, we find $f'_l (\gamma (0)) >0$ for any geodesic that emanates from $y_*$. Since the left (and right) derivatives of $f$ are monotonically increasing by (\ref{cvxlrdv}), we get the claim for such geodesics by Lemma \ref{lmasift}, and find in particular that $f (\gamma (\tau))$ is strictly increasing in $\tau$ for any geodesic emanating from $y_*$.

We now claim that for each geodesic ray $\{ \gamma (\tau) \}_{\tau \ge 0}$, passing through $y_0$ (or indeed any point) but not necessarily through $y_*$, there exists $\tau_*$ such that $f (\gamma (\tau_*)) = \inf_{\tau \ge 0} f(\gamma (\tau)) > - \infty$; given such claim, we get the result $1 \Rightarrow 4$ by exactly the same argument as before, by using the strict geodesic convexity of $f$. 

Suppose for contradiction that we have an unbounded sequence $\{ \tau_i \}_i \subset \rl_{\ge 0}$ such that $f( \gamma (\tau_i)) \to \inf_{\tau \ge 0} f(\gamma (\tau)) =: m_{\gamma}$ as $i \to \infty$. We then find that the $d_Y$-unbounded sequence $\{ y_i \}_{i} \subset Y$, $y_i := \gamma (\tau_i)$, satisfies $f(y_i) \to m_{\gamma}$. Since $(Y,\textsl{g}_Y)$ is complete, we can connect each $y_i$ to $y_*$ by a geodesic $\{ \gamma_{i} ( s ) \}_{s \ge 0}$, say, with $\gamma_{i} (0) = y_*$ and $y_i = \gamma_{i} (s_i)$. By passing to a subsequence if necessary, we may assume that $\gamma_i (1) \in S(y_* , 1)$ converges to some $\gamma_* (1) \in S(y_*, 1)$ by recalling the compactness of $S(y_*, 1)$. Recalling that $\{ y_i \}_i$ is $d_Y$-unbounded, and also that we know $\lim_{s \to + \infty} f( \gamma_{*} (s) )/s =:c >0$ by the argument above (we assume $c$ is finite, but the case $c = + \infty$ can be treated similarly), we apply Lemma \ref{lmtccvsp} to conclude $\liminf_{i \to \infty} f(y_i)/\tau_i >0$. However, $f (y_i) \to m_{\gamma}$ ($i \to \infty$) would imply that $\limsup_{i \to \infty} f(y_i)/\tau_i  = \lim_{i \to \infty} f(y_i)/\tau_i =0$, which is a contradiction.

		We finally prove $4 \Rightarrow 3$. Suppose for contradiction that for any $C,D >0$ there exists $y \in Y$ such that $f(y) < C d_Y (y, y_0) -D$. Pick a sequence $\{ C_i \}_i, \{ D_i \}_i \subset \rl_{>0}$ such that $C_i \to 0$ and $D_i \to + \infty$ as $i \to \infty$, and also $\{ y_i \}_i \subset Y$ so that
		\begin{equation*}
			f(y_i) < C_i d_Y (y_i, y_0) - D_i.
		\end{equation*}
		This means that $\{ y_i \}_i$ cannot be contained in a $d_Y$-bounded subset in $Y$, since otherwise we would have $f(y_i) < C' - D_i \to - \infty$ as $i \to \infty$, for some constant $C' >0$ that does not depend on $i$, which is a contradiction since a $d_Y$-bounded subset is compact by Theorem \ref{thmhopfrinow} and hence $\inf_i f(y_i)$ should be finite.
		
		We now choose a geodesic $\{ \gamma_{i} (\tau) \}_{\tau \ge 0}$ with $y_i = \gamma_{i} (\tau_i)$, and $\gamma_{i} (0) = y_0$. Note $\tau_i = d_Y (y_i, y_0)$. Since $\{ y_i \}_i$ is $d_Y$-unbounded we find that $\{ \tau_i \}_i \subset \rl_{\ge 0}$ is unbounded, and by taking a subsequence we further assume that it is monotonically increasing. Moreover, since $S (y_0 , 1)$ is compact, we may further assume that $\gamma_i (1)$ converges to $\gamma_* (1) \in S (y_0 , 1)$. We then have $\liminf_{i \to \infty} f(y_i) / \tau_i >0$ by Lemma \ref{lmtccvsp}, but on the other hand
		\begin{equation*}
			\frac{f (y_i)}{\tau_i} < C_i - \frac{D_i}{\tau_i}
		\end{equation*}
		implies
		\begin{equation*}
			\limsup_{i \to \infty} \frac{f (y_i)}{\tau_i} \le \limsup_{i \to \infty} C_i - \liminf_{i \to \infty} \frac{D_i}{\tau_i} \le 0,
		\end{equation*}
		which is a contradiction.
\end{proof}

\begin{remark} \label{rmstgdy0}
	In the fourth and fifth condition of Theorem \ref{thcxffmp}, the geodesic $\gamma$ need not emanate from $y_0$. To see this, we only need to check that the first condition implies $\lim_{\tau \to + \infty} f(\gamma(\tau)) / \tau >0$ for all geodesics not necessarily emanating from $y_0$, which is stronger than the fourth, but this is obvious from the proof of $1 \Rightarrow 4$.
\end{remark}

We also have a group equivariant version, which can deal with the balanced metrics when it is applied to the case where $Y= Y_k = GL(N_k , \cx)) / U(N_k)$ and $\mathrm{Aut}_0(X,L)$ is nontrivial; see \cite[Sections 2.3 and 5.1]{acbal} for more details. Let $H$ be any group, and suppose that $Y$ admits an $H$-action which is a $d_Y$-isometry; for the sake of the exposition we assume that this is the right action but the argument is exactly the same when it is switched to the left action. We define
\begin{equation*}
	d_{Y, H} (y_1,y_2) := \inf_{h_1 , h_2 \in H} d_Y (y_1 \cdot h_1 , y_2 \cdot h_2),
\end{equation*}
which is sometimes called the \textbf{reduced distance}.

We also say that a geodesic $\gamma$ is \textbf{parametrised by the reduced arc-length} if $\tau = d_{Y,H} (\gamma (\tau) , \gamma (0))$.

\begin{theorem} \label{thcxffmpg}
	Let $(Y,\textsl{g}_Y)$ be a complete Riemannian manifold with the isometric $H$-action, and $y_0 \in Y$ be a fixed point in $Y$. Let $f : Y \to \rl$ be a $H$-invariant continuous function that is strictly convex along geodesic rays in $(Y,\textsl{g}_Y)$ that is not contained in the $H$-orbit, i.e. 
	\begin{equation*}
		f(\gamma (\tau) ) \le \frac{\tau - \tau_0}{\tau_1 - \tau_0}  f(\gamma (\tau_1)) + \frac{\tau_1-\tau}{\tau_1 - \tau_0} f ( \gamma (\tau_0) ),
	\end{equation*}
	for $\tau \in [\tau_0 , \tau_1]$, with equality if and only if $\{ \gamma (\tau ) \}_{\tau_0 \le \tau \le \tau_1}$ is contained in $\gamma (\tau_0) \cdot H$. The following are equivalent.
	
	\begin{enumerate}
		\item There exists $y_* \in Y$ which attains the global minimum of $f$, which is unique modulo the $H$-action.
		\item There exists $C, D >0$ such that $f (y) \ge C d_{Y, H} (y, y_0) -D$.
		\item $\displaystyle{\lim_{\tau \to + \infty} \frac{f(\gamma(\tau))}{\tau} >0}$ for any geodesic $\gamma$ emanating from $y_0$, not contained in the $H$-orbit.
		\item There exists $C >0$ such that $\displaystyle{\lim_{\tau \to + \infty} \frac{f(\gamma(\tau))}{\tau} > C }$ for any geodesic $\gamma$ emanating from $y_0$ and not contained in the $H$-orbit, parametrised by the reduced arc-length.
	\end{enumerate}
\end{theorem}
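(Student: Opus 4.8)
The plan is to transcribe the proof of Theorem~\ref{thcxffmp} to the ``reduced'' geometry, systematically replacing $d_Y$ by the reduced distance $d_{Y,H}$ and the phrase ``relatively compact'' by ``relatively compact modulo the $H$-action''. Concretely, I will establish the implications $(2) \Rightarrow (1)$, $(1) \Rightarrow (2)$, $(2) \Rightarrow (4)$, $(4) \Rightarrow (3)$ and $(3) \Rightarrow (2)$, which together give all four equivalences; of these the substantial ones are $(1) \Rightarrow (2)$ and its variant $(3) \Rightarrow (2)$, which are the equivariant counterparts of the implications $1 \Rightarrow 4$ and $4 \Rightarrow 3$ in the proof of Theorem~\ref{thcxffmp}.

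Three preliminary remarks carry over the machinery. First, since $H$ acts by $d_Y$-isometries, the function $d_{Y,H}(\,\cdot\,, y_0)$ is $1$-Lipschitz with respect to $d_Y$, hence continuous; in particular inequalities involving $d_{Y,H}$ survive passage to limits of geodesics of the kind produced in Lemma~\ref{lmtccvsp}. Second, if $\{y_i\}_i$ is $d_{Y,H}$-bounded, then choosing $h_i \in H$ with $d_Y(y_i \cdot h_i, y_0) \le d_{Y,H}(y_i, y_0) + 1$ places the translates $\{y_i \cdot h_i\}_i$ in a closed $d_Y$-ball, hence (Hopf--Rinow, Theorem~\ref{thmhopfrinow}) in a $d_Y$-compact set; since $f$ is $H$-invariant, this serves exactly as $d_Y$-compactness does in the non-equivariant proof. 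Third, by the uniqueness modulo $H$ in $(1)$ together with the $H$-invariance of $f$, the level set $\{ f = \inf_Y f \}$ is a single $H$-orbit; combining this with the hypothesis that equality in the convexity inequality forces the segment into one $H$-orbit, one gets: along any geodesic $\delta$ emanating from a minimizer $y_*$ and not contained in $y_* \cdot H$, $f$ is non-decreasing with $\lim_{s \to +\infty} f(\delta(s))/s > 0$ --- indeed if this limit (which exists by Lemma~\ref{lmasift}) were zero, then $f \circ \delta$ would be constant equal to $\inf_Y f$, forcing $\delta \subset \{ f = \inf_Y f\} = y_* \cdot H$, a contradiction. This last remark is the equivariant replacement for the opening paragraph of the proof of $1 \Rightarrow 4$ in Theorem~\ref{thcxffmp}.

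Granting these, $(2) \Rightarrow (1)$ follows exactly as $3 \Rightarrow 2 \Rightarrow 1$ in Theorem~\ref{thcxffmp}: $(2)$ makes $f$ bounded below and a minimizing sequence $d_{Y,H}$-bounded, so by the second remark one extracts, after $H$-translation, a convergent subsequence whose limit realises $\inf_Y f$; uniqueness modulo $H$ comes from strict convexity applied to a minimizing geodesic between two minimizers in distinct $H$-orbits, which is not contained in a single orbit. For $(2) \Rightarrow (4)$: if $\gamma$ emanates from $y_0$ and is parametrised by reduced arc-length then, by definition of that parametrisation, $f(\gamma(\tau)) \ge C\, d_{Y,H}(\gamma(\tau), y_0) - D = C\tau - D$, so the asymptotic slope is at least $C$. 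And $(4) \Rightarrow (3)$ reduces to the comparison $d_{Y,H} \le d_Y$ relating the reduced and ordinary arc-length parametrisations. The core is $(1) \Rightarrow (2)$ (and, mutatis mutandis, $(3) \Rightarrow (2)$): assuming $(2)$ fails produces $y_i$ with $f(y_i) < C_i d_{Y,H}(y_i, y_0) - D_i$ for some $C_i \downarrow 0$, $D_i \uparrow +\infty$; boundedness below of $f$ (from $(1)$, resp.\ first shown directly from $(3)$ by the same limiting argument) forces $t_i := d_{Y,H}(y_i, y_*) \to +\infty$; replacing $y_i$ by a near-optimal $H$-translate $z_i$ with $r_i := d_Y(z_i, y_*) \in [t_i, t_i + 1)$ and joining $y_*$ to $z_i$ by a unit-speed geodesic $\delta_i$, the triangle inequality for $d_{Y,H}$ gives $d_{Y,H}(\delta_i(s), y_*) \ge s - 1$ for $0 \le s \le r_i$; extracting a limiting direction $\delta_*$ via compactness of the sphere $S(y_*, 1)$ and continuity of geodesics, the first remark yields $d_{Y,H}(\delta_*(2), y_*) \ge 1 > 0$, so $\delta_*$ is not contained in $y_* \cdot H$; then the third remark and Lemma~\ref{lmtccvsp} (with base point $y_*$) give $\liminf_i f(z_i)/r_i > 0$, contradicting $f(z_i) = f(y_i) < C_i(t_i + \mathrm{const}) - D_i$.

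The main obstacle is exactly the point highlighted in the last step: in every compactness argument the limiting geodesic direction obtained by passing to the unit distance sphere might a priori run \emph{along} an $H$-orbit, and then Lemma~\ref{lmtccvsp} is vacuous, its hypothesis on the asymptotic slope of $f$ along $\gamma_*$ failing because $f$ is constant along orbit directions. The device that circumvents this --- and the one genuinely new ingredient beyond Theorem~\ref{thcxffmp} --- is to first move the points to their near-$d_{Y,H}$-optimal $H$-translates and then to track $d_{Y,H}(\delta_i(s), y_*)$ along the connecting geodesics: the bound $d_{Y,H}(\delta_i(s), y_*) \ge s - 1$ is preserved in the limit by Lipschitz continuity, certifying that the limiting direction stays a definite distance away from the orbit. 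Everything else is a faithful transcription of the proof of Theorem~\ref{thcxffmp} with $d_Y \rightsquigarrow d_{Y,H}$.
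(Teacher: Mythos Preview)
Your proof is correct and follows the paper's overall strategy of transcribing Theorem~\ref{thcxffmp} with $d_Y$ replaced by $d_{Y,H}$, but the organisation differs in two respects. First, the paper proves the cycle $2 \Rightarrow 1 \Rightarrow 3 \Rightarrow 2$ (declaring $4 \Rightarrow 3$ and $2 \Rightarrow 4$ obvious), whereas you prove $1 \Rightarrow 2$ directly by working from the minimiser $y_*$ rather than routing through condition $(3)$ and the base point $y_0$; this is a mild shortcut, though you still need $3 \Rightarrow 2$ separately. Second, and more substantively, you confront explicitly the one genuine obstacle in the equivariant setting: the limiting geodesic direction extracted by compactness of $S(y_*,1)$ might a priori lie in an $H$-orbit, where Lemma~\ref{lmtccvsp} is inapplicable. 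Your device of choosing near-$d_{Y,H}$-optimal translates so that $d_{Y,H}(\delta_i(s), y_*) \ge s - 1$ along the connecting geodesics, a bound that passes to the limit by Lipschitz continuity of $d_{Y,H}$, cleanly rules this out. The paper, by contrast, only verifies that each individual connecting geodesic $\gamma_i$ avoids the $H$-orbit and then asserts that ``the argument for Theorem~\ref{thcxffmp} carries over word by word'', leaving the verification that the \emph{limiting} direction also avoids the orbit to the reader; your tracking argument is precisely what is needed to fill this in, and you are right to flag it as the new ingredient.
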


As pointed out in Remark \ref{rmstgdy0}, the geodesics in the third and fourth items need not emanate from $y_0$.

\begin{proof}
	We prove $2 \Rightarrow 1 \Rightarrow 3 \Rightarrow 2$, as $4 \Rightarrow 3$ and $2 \Rightarrow 4$ are obvious by arguing as before in the proof of Theorem \ref{thcxffmp}.

	The proof of $2 \Rightarrow 1$ is just an adaptation of Lemma \ref{lmprestift}: noting that $f$ is bounded below, we pick a minimising sequence $\{ y_i \}_i$ so that $f(y_i) \to \inf_Y f$ as $i \to \infty$. We then find that $\{ d_{Y, H} (y_i, y_0) \}_i \subset \rl_{\ge 0}$ is a bounded sequence. Thus there exists $\{ h'_i \}_i , \{ h_i \}_i \subset H$ such that $\{ d_Y (y_i \cdot h'_i , y_0 \cdot h_i) \}_{i}$ is also a bounded sequence. Now, since $H$ acts on $Y$ by isometry, we find $d_Y (y_i \cdot h'_i , y_0 \cdot h_i) = d_Y (y_i \cdot (h'_i h_i^{-1}) , y_0)$. Hence defining $y'_i := y_i \cdot (h'_i h_i^{-1})$, we get a $d_Y$-bounded sequence $\{ y'_i \}_i \subset Y$, which must contain a subsequence that converges in $Y$ by Theorem \ref{thmhopfrinow}. The limit of any such subsequence must attain $\inf_Y f$, since $f (y_i) = f(y'_i)$ by the $H$-invariance, proving the existence of the global minimiser. To show the uniqueness modulo the $H$-action, we argue as before: pick any $y_1, y_2 \in Y$ with $y_1 \neq y_2$ and $f(y_1 ) = f(y_2) = \inf_Y f$, and a geodesic $\gamma$ connecting them with $y_1 = \gamma (0)$ and $y_2 = \gamma (1)$. We then find
	\begin{equation*}
		f(\gamma (\tau)) \le (1-\tau) f(\gamma(0)) + \tau f(\gamma (1)) = \inf_Y f
	\end{equation*}
	for all $0 \le \tau \le 1$. The above inequality must clearly be an equality, which forces $\gamma ( \tau ) \subset y_1 \cdot H$ as required.
	
	The other implications are almost exactly the same as the proof of Theorem \ref{thcxffmp}, but we supplement some details.
	
	We prove $1 \Rightarrow 3$. Since $f$ is strictly increasing along geodesics not contained in the $H$-orbit, we find $f'_l (\gamma (0)) >0$ for any geodesic that emanates from the global minimum $y_*$ which is not contained in the $H$-orbit. Since the left (and right) derivatives of $f$ are monotonically increasing by (\ref{cvxlrdv}), we get the claim for such geodesics by Lemma \ref{lmasift}. We then prove, just as we did in Theorem \ref{thcxffmp}, that for each geodesic ray $\{ \gamma (\tau) \}_{\tau \ge 0}$, not necessarily passing through $y_*$ and not contained in the $H$-orbit, there exists $\tau_*$ such that $f (\gamma (\tau_*)) = \inf_{\tau \ge 0} f(\gamma (\tau)) > - \infty$. The only modification required in the proof is that we need to make sure that each $y_i$ can be connected to $y_*$ by a geodesic that is not contained in the $H$-orbit; this is straightforward, since otherwise there would exist $h \in H$ such that $f(y_i) = f(y_* \cdot h) = f(y_*) = \inf_Y f$, by the $H$-invariance of $f$, which must attain $\inf_{\tau \ge 0} f(\gamma (\tau))$. This gives us the desired result.
	
	The proof of $3 \Rightarrow 2$ is also similar. We pick a sequence $\{ C_i \}_i, \{ D_i \}_i \subset \rl_{>0}$ such that $C_i \to 0$ and $D_i \to + \infty$ as $i \to \infty$, and also $\{ y_i \}_i \subset Y$ so that
		\begin{equation*}
			f(y_i) < C_i d_{Y,H} (y_i, y_0) - D_i,
		\end{equation*}
		which means that $\{ y_i \}_i$ cannot be contained in a $d_{Y,H}$-bounded subset in $Y$, since as we saw in the proof of $2 \Rightarrow 1$ we can define $y'_i := y_i \cdot h_i$ for some $\{ h_i \}_i \subset H$ such that for all large enough $i$ we have
		\begin{equation*}
			f(y'_i) < C_i d_{Y} (y'_i, y_0) - D_i+1.
		\end{equation*}
		This means that the sequence $\{ y'_i \}_i \subset Y$ must be $d_Y$-unbounded, as otherwise it would contradict Theorem \ref{thmhopfrinow}. We note that we can connect each $y'_i$ to $y_0$ by a geodesic that is not contained in the $H$-orbit (as otherwise we would have $f(y'_i) = f(y_0 \cdot h_i) = f(y_0)$), the argument for Theorem \ref{thcxffmp} carries over word by word to derive the required contradiction.
\end{proof}

We finally summarise what we have proved so far in the form that is helpful for the study of the (anticanonical) balancing energy defined on the symmetric space $Y=Y_k = GL(N_k , \cx)) / U(N_k)$, which is a complete Riemannian manifold with respect to the bi-invariant metric.

\begin{corollary} \label{crcxffmpg}
	Let $(Y,\textsl{g}_Y)$ be a complete Riemannian manifold with the isometric $H$-action, with a fixed point $y_0 \in Y$. Suppose that an $H$-invariant $C^1$ function $f : Y \to \rl$ is strictly convex along geodesics emanating from $y_0$ and not contained in the $H$-orbit.
	
	Then $f$ admits a critical point if and only if 
	\begin{equation*}
		\lim_{\tau \to + \infty} \frac{f(\gamma(\tau))}{\tau} >0
	\end{equation*}
	for any geodesic $\gamma$ emanating from $y_0$ that is not contained in the $H$-orbit. Moreover, any critical point must be the global minimiser of $f$ over $Y$ which is unique modulo the $H$-action.
\end{corollary}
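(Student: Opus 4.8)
The plan is to obtain the corollary as a direct consequence of Theorem~\ref{thcxffmpg} combined with Lemma~\ref{lmecvcpgmin}; the only real work is to pass between the ``critical point'' formulation used here and the ``global minimum'' formulation of Theorem~\ref{thcxffmpg}. First I would record that, since $f$ is $H$-invariant, it is constant (hence convex) along any geodesic contained in an $H$-orbit, so the strict-convexity hypothesis already gives that $f$ is convex along \emph{every} geodesic emanating from $y_0$. In particular $\tau \mapsto f(\gamma(\tau))$ is a convex function of arc length for each geodesic ray $\gamma$, so by Lemma~\ref{lmasift} the limit $\lim_{\tau\to+\infty} f(\gamma(\tau))/\tau$ appearing in the statement exists in $(-\infty,+\infty]$ and the stated condition is meaningful.

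For the equivalence itself: if $f$ has a critical point $y_\ast$, then Lemma~\ref{lmecvcpgmin} shows that $y_\ast$ attains the global minimum of $f$ over $Y$, so condition~(1) of Theorem~\ref{thcxffmpg} holds; hence condition~(3) of that theorem holds, which is exactly the asymptotic-slope inequality asserted in the corollary. Conversely, if that inequality holds then condition~(3) of Theorem~\ref{thcxffmpg} holds, so by the theorem condition~(1) holds and $f$ attains its infimum at some $y_\ast \in Y$; since $Y$ is a manifold without boundary and $f$ is $C^1$, such a minimiser is necessarily a critical point of $f$. The final assertion follows at once: any critical point is a global minimiser by Lemma~\ref{lmecvcpgmin}, and the minimiser is unique modulo the $H$-action by condition~(1) of Theorem~\ref{thcxffmpg}.

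The one point that I expect to require care is the invocation of Lemma~\ref{lmecvcpgmin} at a critical point $y_\ast$ that need not coincide with $y_0$: its proof compares $f(y_\ast)$ with $f(y)$ for an arbitrary $y$ by joining $y_\ast$ to $y$ with a (minimising) geodesic --- which exists by Hopf--Rinow, Theorem~\ref{thmhopfrinow} --- and using convexity of $f$ along that geodesic together with $df_{y_\ast}=0$. In the cases of interest, where $Y = Y_k = GL(N_k,\cx)/U(N_k)$ with the bi-invariant metric and $f$ is the (anticanonical) balancing energy, $f$ is convex along \emph{all} geodesics, so Lemma~\ref{lmecvcpgmin} applies verbatim; in general one simply checks that the available convexity suffices to run this comparison. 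No ingredient beyond Theorem~\ref{thcxffmpg} and the Heine--Borel property furnished by Hopf--Rinow is needed.
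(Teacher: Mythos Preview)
Your approach is exactly what the paper intends: the corollary is presented there without proof, as a summary of the preceding results, so the natural argument is precisely to combine Theorem~\ref{thcxffmpg} (equivalence of the global-minimum condition and the asymptotic-slope condition) with Lemma~\ref{lmecvcpgmin} (critical point $\Leftrightarrow$ global minimiser for a geodesically convex $C^1$ function), which is what you do.

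The caveat you flag is genuine and worth stating more sharply. The hypothesis of the corollary, read literally, only assumes strict convexity along geodesics \emph{emanating from $y_0$} (and not in an $H$-orbit), whereas both Lemma~\ref{lmecvcpgmin} and the implication $1\Rightarrow 3$ in Theorem~\ref{thcxffmpg} use convexity along geodesics emanating from the minimiser $y_*$, and the uniqueness part of $2\Rightarrow 1$ uses convexity along a geodesic joining two minimisers. So under the corollary's hypothesis \emph{as written}, your invocation of Lemma~\ref{lmecvcpgmin} and of Theorem~\ref{thcxffmpg} is not literally justified. Your observation that in the intended application ($Y=Y_k$ with the balancing energy) $f$ is convex along \emph{all} geodesics is the honest resolution: the corollary should really be read with the same convexity hypothesis as Theorem~\ref{thcxffmpg}, and then your argument goes through verbatim. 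It would be cleaner to say this explicitly rather than ``one simply checks that the available convexity suffices,'' since with convexity only along rays from $y_0$ it does not obviously suffice.
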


\medskip

\noindent \textbf{Acknowledgements} The author thanks the anonymous referee for helpful suggestions. This work is partially supported by JSPS KAKENHI Grant Number 19K14524.

\end{document}